\newtheorem{Lem}{Lemma}[section] 
\newtheorem{Prop}[Lem]{Proposition} 
\newtheorem{Thm}[Lem]{Theorem} 
\newtheorem{Cor}[Lem]{Corollary} 
\newtheorem{Def}[Lem]{Definition} 
\newtheorem{Rem}[Lem]{Remark} 
\newtheorem{Prbl}[Lem]{Problem} 
\newtheorem{Alg}[Lem]{Algorithm} 
\newenvironment{proof}{{\sc Proof.}}{$\Box$}
\begin{document} 

\title{Self-adjoint symmetry operators connected with the magnetic Heisenberg ring} 
\author{Bernd Fiedler}
\address{Eichelbaumstr. 13, D-04249 Leipzig, Germany. URL: http://www.fiemath.de/}  
\ead{bfiedler@fiemath.de}  

\begin{abstract}
In \cite{fie07c} we defined symmetry classes, commutation symmetries and symmetry operators in the Hilbert space $\mathcal{H}$ of the {\rm 1D} spin-{\rm 1/2} Heisenberg magnetic ring with $N$ sites and investigated them by means of tools from the representation theory of symmetric groups $\mathcal{S}_N$ such as decompositions of ideals of the group ring $\mathbb{C}[\mathcal{S}_N]$, idempotents of $\mathbb{C}[\mathcal{S}_N]$, discrete Fourier transforms of $\mathcal{S}_N$, Littlewood-Richardson products. 

In the present paper we consider symmetry operators $a\in\mathbb{C}[\mathcal{S}_N]$ on $\mathcal{H}$ which fulfil\\
\parbox{12cm}{\[
\overline{a}^{\ast} = a,
\]}\hfill(S)\\
where $\overline{a}$ denotes the complex conjugate of $a$ and the element $a^{\ast}\in\mathbb{C}[\mathcal{S}_N]$ of $a = \sum_p a_p p$ is defined by $a^{\ast} = \sum_p a_p p^{-1}$. Such symmetry operators are {\it self-adjoint} because of the relation $\langle au | v\rangle = \langle u |\overline{a}^{\ast}v\rangle$ which holds true for all $a\in\mathbb{C}[\mathcal{S}_N]$ and $u,v\in\mathcal{H}$. They yield consequently {\it observables} of the quantum mechanical Heisenberg model. We prove the following results:

{\bf (i)} Let $G\subseteq\mathcal{S}_N$ be an arbitrary subgroup and $\chi$ be an irreducible character of $G$ then $\chi := \frac{\chi(\mathrm{id})}{|G|}\sum_{p\in G} \chi(p)\,p$ is an idempotent of $\mathbb{C}[\mathcal{S}_N]$ which has property (S). This leads to a big manifold of observables. In particular every commutation symmetry belonging to $G$ yields a 1-dimensional character of $G$ from which we can build one of the idempotents $\chi$ described here.

{\bf (ii)} Let $\mathcal{R}\subseteq\mathbb{C}[\mathcal{S}_N]$ be a minimal right ideal of $\mathbb{C}[\mathcal{S}_N]$. Then the set of all generating (primitive) idempotents of $\mathcal{R}$ contains one and only one idempotent $e$ which satisfies $\overline{e}^{\ast} = e$. This is a result by H. Weyl \cite{weyl4} for which we give a new proof.

{\bf (iii)} Every idempotent $e$ with property (S) can be decomposed into primitive idempotents $e = f_1 + \ldots + f_k$ which have also property (S) and satisfy $f_i\cdot f_j = 0$ if $i\not= j$. We give a computer algorithm for the calculation of such decompositions.

In big group rings $\mathbb{C}[\mathcal{S}_N]$ computer calculations are only possible by the use of a diskrete Fourier transform $D: \mathbb{C}[\mathcal{S}_N]\rightarrow\bigoplus_{\lambda} \mathbb{C}^{d_{\lambda}\times d_{\lambda}}$. We present two algorithms which allow the calculation of the matrix $B = D_{\lambda}(a^{\ast})$ from a matrix $A = D_{\lambda}(a)$ of an $a\in\mathbb{C}[\mathcal{S}_N]$ without to determine the inverse Fourier transform of $A$. These algorithms use as precomputed data permutation sets $\mathcal{P}_{\lambda}$ which define bases $\{ p\cdot z_{\lambda} | p\in\mathcal{P}_{\lambda}\}$ of the minimal two-sided ideals $\mathcal{Z}_{\lambda}$ belonging to the partitions $\lambda\vdash N$. ($z_{\lambda}$ is the central-primitive generating idempotent of $\mathcal{Z}_{\lambda}$). The sets $\mathcal{P}_{\lambda}$ can be used for any discrete Fourier transform $D$.

In our investigations we use computer calculations by means of the {\sf Mathematica} packages {\sf PERMS} and {\sf HRing}.
\end{abstract}

\section{The Heisenberg model, symmetry operators, symmetry classes, commutation symmetries} \label{sec2}
We summarize essential concepts of the one-dimensional (1D) spin-{1/2} Heisenberg model of a magnetic ring (see e.g. \cite{KarMu,KarHuMu,KarHuMu2} or \cite{fie07c}).

We denote by $\widehat{N}$ the set $\widehat{N}:=\{1,\ldots,N\}$ of the integers $1, 2,\ldots,N$ and by $\widehat{K}^{\widehat{N}}$ the set of all functions $\sigma : \widehat{N}\rightarrow\widehat{K}$.
\begin{Def}We assign to every function $\sigma\in\widehat{K}^{\widehat{N}}$ the sequence $|\sigma\rangle := |\sigma(1),\sigma(2),\ldots,\sigma(N)\rangle$ of its values over the set $\widehat{N}$. Then the Hilbert space of a ring model with $N$ nodes  and a spin-alphabet of $K$ letters {\rm (}see {\rm\cite{jak2lu2c})} is the set of all formal complex linear combinations
$\mathcal{H} := \mathcal{L}_{\mathbb{C}}\left\{\;|\sigma\rangle\;|\;\sigma\in\widehat{K}^{\widehat{N}}\;\right\}$ of the $|\sigma\rangle$ in which the set $\mathcal{B} := \left\{\;|\sigma\rangle\;|\;\sigma\in\widehat{K}^{\widehat{N}}\;\right\}$ of all $|\sigma\rangle$ is considered a set of linearly independent elements. We equip $\mathcal{H}$ with the scalar product
\begin{equation}
\langle\sigma |\sigma'\rangle := {\delta}_{\sigma ,\sigma'} \;\;\;\mbox{and}\;\;\;
\langle u | v\rangle := \sum_{\sigma\in\widehat{K}^{\widehat{N}}} u_{\sigma} \overline{v}_{\sigma}
\;\;\;\mbox{for}\;\;\;
u = \sum_{\sigma\in\widehat{K}^{\widehat{N}}} u_{\sigma} |\sigma\rangle
\;,\;
v = \sum_{\sigma\in\widehat{K}^{\widehat{N}}} v_{\sigma} |\sigma\rangle
\in\mathcal{H}\,.
\end{equation}
\end{Def}
Obviously, $\mathcal{H}$ is an Hilbert space of dimension
$\dim\mathcal{H} = K^N$, in which $\mathcal{B}$ is an orthonormal basis. The original 1D spin-$\frac{1}{2}$ Heisenberg ring arises for $K = 2$, where
$\sigma(k) = 2$ represents an {\it up spin} $\sigma(k) = \uparrow$ and $\sigma(k) = 1$ a {\it down spin} $\sigma(k) = \downarrow$ at site $k$.
According to {\rm\cite{KarMu,KarHuMu}}, the Hamiltonians $H_F$ {\rm (}$H_A${\rm )} of a {\rm 1D} spin-$\frac{1}{2}$ Heisenberg ferromagnet {\rm (}antiferromagnet{\rm )} of $N$ sites with periodic boundary conditions $S_{N+1}^{\alpha} := S_1^{\alpha}$, $\alpha\in\{+ , - , z\}$, are defined by
\begin{equation}
H_F := - J \sum_{k = 1}^N \left[ \frac{1}{2}\left(S_k^{+}S_{k+1}^{-} + S_k^{-}S_{k+1}^{+}\right) + S_k^z S_{k+1}^z \right]\,,\hspace*{0.5cm}H_A := - H_F\,,\hspace*{0.5cm}J = \mathrm{const.} > 0
\end{equation}
where $S_k^{+}$, $S_k^{-}$ and $S_k^z$ are spin flip operators.

Symmetry classes classes in $\mathcal{H}$ can be defined in analogy to the definition of symmetry classes of tensors (see \cite{weyl4,boerner2,weyl1,fie18,fie07c}). First we define symmetry operators.
\begin{Def}
Let $\mathcal{H}$ be the Hilbert space of a ring model of $N$ sites and $K$ letters, $\mathcal{S}_N$ be the symmetric group of the permutations of $1, 2,\ldots,N$ and $\mathbb{C}[\mathcal{S}_N]$ be the complex group ring of $\mathcal{S}_N$.
\begin{itemize}
\item[\rm (i)]{Following {\rm\cite{jak2lu2,jak2lu2b}} we define the action of a permutation $p\in\mathcal{S}_N$ on a basis vector $|\sigma\rangle\in\mathcal{H}$ by $p |\sigma\rangle := |\sigma\circ p^{-1}\rangle$.}
\item[\rm (ii)]{Every group ring element $a = \sum_p a_p p\in\mathbb{C}[\mathcal{S}_N]$ acts as so-called symmetry operator on the vectors $w = \sum_{\sigma} w_{\sigma} |\sigma\rangle\in\mathcal{H}$ by
\begin{equation}
aw :=
\sum_{p\in\mathcal{S}_N} \sum_{\sigma\in{\widehat{K}}^{\widehat{N}}} a_p w_{\sigma} p |\sigma\rangle =
\sum_{p\in\mathcal{S}_N} \sum_{\sigma\in{\widehat{K}}^{\widehat{N}}} a_p w_{\sigma} |\sigma\circ p^{-1}\rangle
\end{equation}
}
\end{itemize}
\end{Def}
A symmetry operator is a linear mapping $a:\mathcal{H}\rightarrow\mathcal{H}$ which possesses the following properties:
\begin{Prop}[\cite{fie07c}]
\begin{itemize}
\item[\rm (i)]{It holds $p(q|\sigma\rangle) = (p\circ q)|\sigma\rangle$ for all $p,q\in\mathcal{S}_N$, where ''$\circ$'' is defined by $(p\circ q)(i) := p(q(i))$.}
\item[\rm (ii)]{It holds $a(bw) = (a\cdot b)w$ for all $a,b\in\mathbb{C}[\mathcal{S}_N]$ and $w\in\mathcal{H}$, where ''$\cdot$'' denotes the multiplication $a\cdot b := \sum_p \sum_q a_p b_q p\circ q$ of group ring elements.}
\item[\rm (iii)]{It holds $\langle au | v\rangle = \langle u |\overline{a}^{\ast}v\rangle$ for all $a\in\mathbb{C}[\mathcal{S}_N]$ and $u,v\in\mathcal{H}$, where $\overline{a}$ denotes the complex conjugate of $a$ and the element $a^{\ast}\in\mathbb{C}[\mathcal{S}_N]$ of $a = \sum_p a_p p$ is defined by $a^{\ast} = \sum_p a_p p^{-1}$.}
\end{itemize}
\end{Prop}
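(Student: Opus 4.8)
The plan is to prove the three parts in order, because (ii) uses (i) and (iii) rests on a unitarity fact proved in the same spirit. Part (i) is a direct computation from the defining action $p|\sigma\rangle := |\sigma\circ p^{-1}\rangle$, using only associativity of the composition $\circ$ and the fact that inversion is an anti\-homomorphism. Part (ii) is then the routine bilinear extension of (i) from single permutations to arbitrary group-ring elements. For (iii) the heart of the matter is to show that each permutation $p$ acts as a unitary operator on $\mathcal{H}$ whose adjoint is $p^{-1}$; everything else is a careful bookkeeping of the complex conjugations forced by the sesquilinearity of $\langle\,\cdot\,|\,\cdot\,\rangle$.

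For (i) I would simply unwind the definition twice:
\[
p(q|\sigma\rangle) = p\,|\sigma\circ q^{-1}\rangle = |(\sigma\circ q^{-1})\circ p^{-1}\rangle = |\sigma\circ(q^{-1}\circ p^{-1})\rangle = |\sigma\circ(p\circ q)^{-1}\rangle = (p\circ q)|\sigma\rangle ,
\]
where the middle equalities use associativity of $\circ$ together with the identity $(p\circ q)^{-1} = q^{-1}\circ p^{-1}$. This anti\-homomorphism is exactly what the inverse in the definition of the action compensates for, so that $p\mapsto\bigl(|\sigma\rangle\mapsto p|\sigma\rangle\bigr)$ becomes a genuine left action. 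For (ii), writing $a=\sum_p a_p p$, $b=\sum_q b_q q$ and $w=\sum_\sigma w_\sigma|\sigma\rangle$, I would expand $a(bw)$ by linearity of the action in both the group-ring argument and the vector argument, apply (i) to replace $p(q|\sigma\rangle)$ by $(p\circ q)|\sigma\rangle$, and recognize the result as $(a\cdot b)w$ with $a\cdot b=\sum_p\sum_q a_p b_q\,(p\circ q)$. No step here is more than reindexing a finite double sum.

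The decisive step for (iii) is the claim that $\langle pu\,|\,v\rangle = \langle u\,|\,p^{-1}v\rangle$ for every permutation $p$. I would verify this first on basis vectors: using $p|\sigma\rangle=|\sigma\circ p^{-1}\rangle$ and $p^{-1}|\tau\rangle=|\tau\circ p\rangle$, the defining relation $\langle\sigma|\sigma'\rangle=\delta_{\sigma,\sigma'}$ gives
\[
\langle\,\sigma\circ p^{-1}\,|\,\tau\,\rangle = \delta_{\sigma\circ p^{-1},\,\tau}
\qquad\mbox{and}\qquad
\langle\,\sigma\,|\,\tau\circ p\,\rangle = \delta_{\sigma,\,\tau\circ p},
\]
the left sides representing $\langle p|\sigma\rangle\,|\,|\tau\rangle\rangle$ and $\langle|\sigma\rangle\,|\,p^{-1}|\tau\rangle\rangle$ respectively; since $\sigma\circ p^{-1}=\tau\iff\sigma=\tau\circ p$, the two Kronecker deltas agree. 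Equivalently, $p$ permutes the orthonormal basis $\mathcal{B}$ bijectively and is therefore unitary with adjoint $p^{-1}$. Extending sesquilinearly to arbitrary $u,v\in\mathcal{H}$ yields the displayed identity for all vectors.

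Finally I would assemble (iii). Using $au=\sum_p a_p\,(pu)$ and linearity of $\langle\,\cdot\,|\,\cdot\,\rangle$ in its first slot,
\[
\langle au\,|\,v\rangle = \sum_p a_p\,\langle pu\,|\,v\rangle = \sum_p a_p\,\langle u\,|\,p^{-1}v\rangle .
\]
On the other side, $\overline{a}^{\ast} = \sum_p \overline{a_p}\,p^{-1}$, so $\overline{a}^{\ast}v = \sum_p \overline{a_p}\,(p^{-1}v)$, and the antilinearity of $\langle\,\cdot\,|\,\cdot\,\rangle$ in its second slot pulls each coefficient out conjugated:
\[
\langle u\,|\,\overline{a}^{\ast}v\rangle = \sum_p \overline{\overline{a_p}}\,\langle u\,|\,p^{-1}v\rangle = \sum_p a_p\,\langle u\,|\,p^{-1}v\rangle ,
\]
which matches the previous line. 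The only place one can slip is precisely this tracking of conjugations: the combination of the adjoint $\ast$, which inverts the permutations, with the complex conjugation $\overline{\phantom{a}}$, which conjugates the coefficients, is forced by, and exactly cancels, the conjugate-linearity of the scalar product in its second argument. That interplay, rather than any computational difficulty, is the genuinely subtle point of the proof.
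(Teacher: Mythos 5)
Your proof is correct. The paper itself states this Proposition with only a citation to \cite{fie07c} and gives no proof, but your argument is the standard one and handles the two delicate points properly: the identity $(p\circ q)^{-1}=q^{-1}\circ p^{-1}$ that makes $p|\sigma\rangle:=|\sigma\circ p^{-1}\rangle$ a genuine left action, and the cancellation between the conjugation in $\overline{a}^{\ast}$ and the conjugate-linearity of $\langle\cdot\,|\,\cdot\rangle$ in its second slot, anchored by the correct observation that each $p$ permutes the orthonormal basis $\mathcal{B}$ and hence acts unitarily with adjoint $p^{-1}$.
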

Now we define symmetry classes in $\mathcal{H}$.
\begin{Def}
Let $\mathcal{R}\subseteq\mathbb{C}[\mathcal{S}_N]$ be a right ideal of
$\mathbb{C}[\mathcal{S}_N]$. Then
\begin{equation}
\mathcal{H}_{\mathcal{R}} := \{ au\in\mathcal{H}\;|\;a\in\mathcal{R}\,,\,
u\in\mathcal{H}\}
\end{equation}
is called the symmetry class of $\mathcal{H}$ defined by $\mathcal{R}$.
\end{Def}
As for tensors one can prove (see e.g. \cite[Chap.~V, {\S}4]{boerner2} or \cite[p.~115]{fie16}).
\begin{Prop}
Let $e\in\mathbb{C}[\mathcal{S}_N]$ be a generating idempotent of a right ideal $\mathcal{R}\subseteq\mathbb{C}[\mathcal{S}_N]$, i.e.
$\mathcal{R} = e\cdot\mathbb{C}[\mathcal{S}_N]$. Then a $u\in\mathcal{H}$ is in $\mathcal{H}_{\mathcal{R}}$ iff $eu = u$.
\end{Prop}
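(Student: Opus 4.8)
The plan is to prove both implications directly from the idempotency of $e$ together with Proposition~(ii), which links the associative product in $\mathbb{C}[\mathcal{S}_N]$ to the action of group-ring elements on $\mathcal{H}$. The one substantive ingredient I would isolate first is that a generating idempotent acts as a \emph{left unit} on its own right ideal: for every $a\in\mathcal{R}$ one has $e\cdot a = a$. Indeed, since $\mathcal{R} = e\cdot\mathbb{C}[\mathcal{S}_N]$, any $a\in\mathcal{R}$ can be written $a = e\cdot b$ with $b\in\mathbb{C}[\mathcal{S}_N]$, whence $e\cdot a = e\cdot(e\cdot b) = (e\cdot e)\cdot b = e\cdot b = a$ by $e^2 = e$. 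Taking $b = \mathrm{id}$ shows in particular that $e\in\mathcal{R}$.

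For the direction $eu = u \Rightarrow u\in\mathcal{H}_{\mathcal{R}}$, I would simply note that $e\in\mathcal{R}$ and $u\in\mathcal{H}$, so $eu\in\mathcal{H}_{\mathcal{R}}$ by the defining description of the symmetry class; hence $u = eu\in\mathcal{H}_{\mathcal{R}}$. For the converse $u\in\mathcal{H}_{\mathcal{R}} \Rightarrow eu = u$, I would write $u = av$ with $a\in\mathcal{R}$ and $v\in\mathcal{H}$, and then compute $eu = e(av) = (e\cdot a)v = av = u$, where the middle step is Proposition~(ii) and the final step uses the left-unit property $e\cdot a = a$ established above.

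I do not expect a genuine obstacle; the proof is a short ring-theoretic argument. The only point demanding care is to keep the two distinct operations apart, namely the ring product ``$\cdot$'' inside $\mathbb{C}[\mathcal{S}_N]$ and the action of group-ring elements on $\mathcal{H}$, and to invoke Proposition~(ii) at precisely the step $e(av) = (e\cdot a)v$ in order to pass from the module action to the ring multiplication before applying $e\cdot a = a$.
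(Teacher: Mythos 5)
Your proof is correct and is exactly the standard argument: the paper itself omits the proof of this Proposition, referring instead to Boerner and to \cite{fie16}, and the argument given there is the same left-unit computation $e\cdot a = a$ for $a\in\mathcal{R}$ combined with the module property $e(av) = (e\cdot a)v$. Nothing is missing.
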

\begin{Cor}
If $e, f$ are generating idempotents of a right ideal $\mathcal{R}\subseteq\mathbb{C}[\mathcal{S}_N]$, then $u\in\mathcal{H}_{\mathcal{R}}$ iff $eu = fu = u$.
\end{Cor}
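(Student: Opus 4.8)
The plan is to prove the Corollary by reducing both directions to the Proposition that immediately precedes it. Since $e$ and $f$ are both generating idempotents of the same right ideal $\mathcal{R}$, the Proposition applies separately to each: a vector $u\in\mathcal{H}$ lies in $\mathcal{H}_{\mathcal{R}}$ if and only if $eu=u$, and equally if and only if $fu=u$. The whole statement is then just the logical conjunction of these two equivalences, so almost no new work is required beyond invoking the previous result twice.

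Concretely, first I would prove the forward direction. Assume $u\in\mathcal{H}_{\mathcal{R}}$. Applying the Proposition with the idempotent $e$ gives $eu=u$, and applying it again with the idempotent $f$ gives $fu=u$; hence $eu=fu=u$. For the converse, suppose $eu=fu=u$. In particular $eu=u$, so by the Proposition (read from right to left, with idempotent $e$) we conclude $u\in\mathcal{H}_{\mathcal{R}}$. This already closes the equivalence, and the extra hypothesis $fu=u$ is automatically consistent since it likewise forces $u\in\mathcal{H}_{\mathcal{R}}$ by the same Proposition applied to $f$.

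The only genuine content worth spelling out is that the Proposition is legitimately applicable to \emph{both} idempotents, which rests on the hypothesis that $e$ and $f$ each generate the \emph{same} ideal $\mathcal{R}=e\cdot\mathbb{C}[\mathcal{S}_N]=f\cdot\mathbb{C}[\mathcal{S}_N]$. This is precisely what the statement assumes, so no independent verification is needed; one should merely note that a right ideal can have many distinct generating idempotents, and the Corollary asserts that the membership criterion $eu=u$ is insensitive to which one is chosen.

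I do not anticipate any real obstacle here, since the Corollary is a formal consequence of the Proposition. If anything, the only subtlety is presentational: one must be careful to state that $eu=u$ alone already characterizes membership, so that the symmetric condition $eu=fu=u$ is not stronger but merely makes the symmetry between the two generators explicit. A one- or two-sentence proof citing the Proposition twice is entirely adequate, and I would resist the temptation to re-derive the Proposition's argument about projections onto $\mathcal{H}_{\mathcal{R}}$.
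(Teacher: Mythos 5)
Your proof is correct and is exactly the intended argument: the paper states this as an immediate Corollary of the preceding Proposition, obtained by applying that Proposition separately to each of the two generating idempotents $e$ and $f$ of the same ideal $\mathcal{R}$. Nothing further is needed.
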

\begin{Def}
We denote by $\mathcal{J}_0$ the set $\mathcal{J}_0 := \{a\in\mathbb{C}[\mathcal{S}_N]\;|\;au = 0\; \forall\,u\in\mathcal{H}\}$.
\end{Def}
If $N > K$ then $\mathcal{J}_0\not=\{0\}$, because then the idempotent
$e := \frac{1}{N!} \sum_p \mathrm{sign}(p) p$ satisfies $e|\sigma\rangle = 0$ for all $|\sigma\rangle\in\mathcal{B}$.
\begin{Prop}[{\cite[p.~116]{fie16}}]
$\mathcal{J}_0$ is a two-sided ideal of $\mathbb{C}[\mathcal{S}_N]$.
\end{Prop}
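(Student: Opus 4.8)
The plan is to recognize $\mathcal{J}_0$ as the kernel (annihilator) of the $\mathbb{C}[\mathcal{S}_N]$-module $\mathcal{H}$ and then to verify directly the three closure properties defining a two-sided ideal, the crucial input being the associativity relation $a(bw) = (a\cdot b)w$ from Proposition (ii).

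First I would check that $\mathcal{J}_0$ is a linear subspace. For $a,b\in\mathcal{J}_0$ and $\lambda,\mu\in\mathbb{C}$, the linearity of the symmetry-operator action in its group-ring argument gives $(\lambda a+\mu b)u = \lambda(au)+\mu(bu) = 0$ for every $u\in\mathcal{H}$, so $\lambda a+\mu b\in\mathcal{J}_0$.

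Next I would establish closure under left and under right multiplication by an arbitrary $b\in\mathbb{C}[\mathcal{S}_N]$, which is where Proposition (ii) does all the work. For the left-ideal property, if $a\in\mathcal{J}_0$ then for every $u$ we have $(b\cdot a)u = b(au) = b\cdot 0 = 0$, whence $b\cdot a\in\mathcal{J}_0$. For the right-ideal property, if $a\in\mathcal{J}_0$ then for every $u$ the vector $bu$ again lies in $\mathcal{H}$, and since $a$ annihilates all of $\mathcal{H}$ we obtain $(a\cdot b)u = a(bu) = 0$, whence $a\cdot b\in\mathcal{J}_0$.

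There is no genuine obstacle here: the statement is just the standard fact that the kernel of an algebra representation is a two-sided ideal, and Proposition (ii) is precisely the assertion that $a\mapsto(u\mapsto au)$ is such a representation of $\mathbb{C}[\mathcal{S}_N]$ on $\mathcal{H}$. The only point requiring a moment's care is to apply the relation $a(bw)=(a\cdot b)w$ in \emph{both} orders of multiplication, so that the left-ideal and the right-ideal properties are each covered; the distinction between the two invocations (pulling $a$ out on the left versus letting $a$ act on $bu\in\mathcal{H}$ on the right) is what makes the ideal two-sided rather than merely one-sided.
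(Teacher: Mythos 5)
Your proof is correct and is the standard (essentially the only) argument: the paper itself gives no proof for this Proposition, merely citing \cite[p.~116]{fie16}, and your verification via the module-action identity $a(bw)=(a\cdot b)w$ applied in both orders is exactly what that reference's argument amounts to.
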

Every two-sided ideal of $\mathbb{C}[\mathcal{S}_N]$ has one and only one generating idempotent which is central. Let $f_0$ be the generating idempotent of $\mathcal{J}_0$. Then $f := id - f_0$ is also a central idempotent which is orthogonal to $f_0$, i.e. $f\cdot f_0 = f_0\cdot f = 0$, and which generates a two-sided ideal $\mathcal{J} := f\cdot\mathbb{C}[\mathcal{S}_N]$ that fulfills $\mathbb{C}[\mathcal{S}_N] = \mathcal{J}\oplus \mathcal{J}_0$. $\mathcal{J}$ contains all those symmetry operators $a$ for which $\ker a\subset\mathcal{H}$. These are the symmetry operators in which we are interestet. (Compare \cite[p.~116]{fie16}.)

A special type of symmetries are symmetries of elements $u\in\mathcal{H}$ with respect to commutations of the nodes of the ring.
\begin{Def}
Let $C\subseteq\mathcal{S}_N$ be a subgroup of $\mathcal{S}_N$ and
$\epsilon: C\rightarrow\mathcal{S}^1$ be a homomorphism of $C$ onto a finite subgroup in the group
$\mathcal{S}^1 = \{z\in\mathbb{C}\;|\;|z| = 1\}$ of complex units. We say that $u\in\mathcal{H}$ possesses the commutation symmetry $(C,\epsilon)$ if
$cu = \epsilon(c)u$ for all $c\in C$.
\end{Def}
\begin{Prop}[{\cite[p.~115]{fie16}}] \label{prop2.14}%
Let $(C,\epsilon)$ be a commutation symmetry.
\begin{itemize}
\item[\rm (i)]{Then the group ring element
$\epsilon := \frac{1}{|C|}\,\sum_{c\in C}\epsilon(c)c$
is an idempotent of $\mathbb{C}[C]\subseteq\mathbb{C}[\mathcal{S}_N]$.}
\item[\rm (ii)]{A $u\in\mathcal{H}$ has the symmetry $(C,\epsilon)$ iff
${\epsilon}^{\ast}u = u$.}
\end{itemize}
\end{Prop}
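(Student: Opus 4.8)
The plan is to prove both parts by direct computation in the group ring $\mathbb{C}[C]$, relying only on the defining property of $\epsilon$ as a homomorphism into the unit circle $\mathcal{S}^1$ and on the module-action identities of the earlier Proposition.

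For part (i), I would compute the product $\epsilon\cdot\epsilon$ directly. Expanding gives $\epsilon\cdot\epsilon = \frac{1}{|C|^2}\sum_{c,c'\in C}\epsilon(c)\epsilon(c')\,(c\circ c')$, and since $\epsilon$ is a homomorphism we have $\epsilon(c)\epsilon(c') = \epsilon(c\circ c')$. I then reindex the double sum by the product $d := c\circ c'$: for each fixed $d\in C$ there are exactly $|C|$ pairs $(c,c')$ with $c\circ c' = d$, namely $c$ arbitrary and $c' = c^{-1}\circ d$. Hence $\epsilon\cdot\epsilon = \frac{1}{|C|^2}\,|C|\sum_{d\in C}\epsilon(d)\,d = \epsilon$, which is the claim. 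Since every $c\in C\subseteq\mathcal{S}_N$ already lies in $\mathbb{C}[C]$, the element $\epsilon$ does as well.

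For part (ii), I would first rewrite $\epsilon^{\ast}$ in a usable form. From the definition $a^{\ast} = \sum_p a_p p^{-1}$ one gets $\epsilon^{\ast} = \frac{1}{|C|}\sum_{c}\epsilon(c)\,c^{-1}$; substituting $c\mapsto c^{-1}$ and using $\epsilon(c^{-1}) = \epsilon(c)^{-1} = \overline{\epsilon(c)}$ (valid because $\epsilon(c)\in\mathcal{S}^1$, so $|\epsilon(c)| = 1$) yields $\epsilon^{\ast} = \frac{1}{|C|}\sum_{c}\overline{\epsilon(c)}\,c$. The forward implication is then immediate: if $cu = \epsilon(c)u$ for all $c\in C$, then $\epsilon^{\ast}u = \frac{1}{|C|}\sum_c\overline{\epsilon(c)}\,cu = \frac{1}{|C|}\sum_c|\epsilon(c)|^2\,u = u$.

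The reverse implication is the one requiring a small idea, and I expect it to be the only nonroutine step. The key is the group-ring identity $c\cdot\epsilon^{\ast} = \epsilon(c)\,\epsilon^{\ast}$ for every $c\in C$, which I would prove by the same reindexing as in (i): writing $c\cdot\epsilon^{\ast} = \frac{1}{|C|}\sum_{d}\overline{\epsilon(d)}\,(c\circ d)$ and setting $e := c\circ d$, one finds $\overline{\epsilon(d)} = \overline{\epsilon(c^{-1}\circ e)} = \epsilon(c)\,\overline{\epsilon(e)}$, so the sum collapses to $\epsilon(c)\,\epsilon^{\ast}$. Granting this identity, if $\epsilon^{\ast}u = u$ then for each $c\in C$ the module-action property $a(bw) = (a\cdot b)w$ from the earlier Proposition gives $cu = c(\epsilon^{\ast}u) = (c\cdot\epsilon^{\ast})u = \epsilon(c)\,\epsilon^{\ast}u = \epsilon(c)u$, so $u$ has the commutation symmetry $(C,\epsilon)$. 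Everything apart from the relation $c\cdot\epsilon^{\ast} = \epsilon(c)\epsilon^{\ast}$ is bookkeeping with the homomorphism property and with $|\epsilon(c)| = 1$.
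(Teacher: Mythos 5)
Your proof is correct and complete: both the reindexing argument for idempotency and the key identity $c\cdot\epsilon^{\ast}=\epsilon(c)\,\epsilon^{\ast}$ (which settles the reverse implication in (ii)) are exactly the standard computations one expects here. The paper itself gives no proof of this proposition — it only cites \cite[p.~115]{fie16} — so there is nothing to compare against, but your argument is the canonical one and I see no gaps.
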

Consequently, a commutation symmetry $(C, \epsilon)$ defines the symmetry class $\mathcal{H}_{\mathcal{R}}$ of the right ideal $\mathcal{R} = {\epsilon}^{\ast}\cdot\mathbb{C}[\mathcal{S}_N]$.

\section{Self-adjoint symmetry operators} \label{sec3}
In the present paper we are interested in symmetry operators $a\in\mathbb{C}[\mathcal{S}_N]$ which are self-adjoint with respect to the scalar product $\langle\cdot , \cdot\rangle$ of $\mathcal{H}$. Such symmetry operators can lead to {\it observables} of the ring model. First we find
\begin{Prop} \label{prop2.1}
Two symmetry operators $a, b\in\mathbb{C}[\mathcal{S}_N]$ satisfy $\langle au, v\rangle = \langle u, bv\rangle$ for all $u,v\in\mathcal{H}$ iff $b = \overline{a}^{\ast} + c$ with $c\in\mathcal{J}_0$.
\end{Prop}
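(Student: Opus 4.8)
The plan is to reduce the whole statement to the adjoint identity $\langle au \mid v\rangle = \langle u \mid \overline{a}^{\ast} v\rangle$ already recorded in part~(iii) of the Proposition above, combined with the definition of the annihilator ideal $\mathcal{J}_0$ and the non-degeneracy of the scalar product on $\mathcal{H}$. The key observation is that part~(iii) exhibits $\overline{a}^{\ast}$ as \emph{a} partner of $a$ satisfying $\langle au, v\rangle = \langle u, \overline{a}^{\ast} v\rangle$; the content of the proposition is precisely that this partner is unique up to elements acting as zero, i.e.\ up to $\mathcal{J}_0$.

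For the easy (``if'') direction I would assume $b = \overline{a}^{\ast} + c$ with $c\in\mathcal{J}_0$ and, for arbitrary $u,v\in\mathcal{H}$, expand $\langle u, bv\rangle = \langle u, \overline{a}^{\ast} v\rangle + \langle u, cv\rangle$ using linearity of the scalar product in the relevant slot. Since $c\in\mathcal{J}_0$ we have $cv = 0$ for every $v$, so the second summand vanishes, and part~(iii) rewrites the first summand as $\langle au, v\rangle$. Hence $\langle au, v\rangle = \langle u, bv\rangle$ for all $u,v$.

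For the (``only if'') direction I would assume $\langle au, v\rangle = \langle u, bv\rangle$ holds for all $u,v$, rewrite the left-hand side by part~(iii) as $\langle u, \overline{a}^{\ast} v\rangle$, and subtract to obtain $\langle u, (b-\overline{a}^{\ast})v\rangle = 0$ for all $u,v\in\mathcal{H}$. Fixing $v$ and writing $w := (b-\overline{a}^{\ast})v$, this says that $w$ is orthogonal to every $u\in\mathcal{H}$; because $\mathcal{B}$ is an orthonormal basis the scalar product is non-degenerate, so $w=0$. As $v$ was arbitrary, $(b-\overline{a}^{\ast})v = 0$ for all $v$, which by the very definition of $\mathcal{J}_0$ means $c := b-\overline{a}^{\ast}\in\mathcal{J}_0$, i.e.\ $b = \overline{a}^{\ast} + c$.

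No step is genuinely hard here; the argument is a uniqueness-of-adjoint computation. The only two points that require care are bookkeeping rather than difficulty: being explicit that ``orthogonal to all $u$ forces $w=0$'' relies on the non-degeneracy supplied by the orthonormal basis $\mathcal{B}$, and the rewriting in the first direction must respect the conjugate-linear convention of $\langle\cdot\mid\cdot\rangle$ fixed in the Definition. One should also note in passing that $\mathcal{J}_0$ is a linear subspace, so that $c = b-\overline{a}^{\ast}$ (rather than its negative) may legitimately be taken as the element of $\mathcal{J}_0$.
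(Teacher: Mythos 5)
Your proposal is correct and follows essentially the same route as the paper's own proof: rewrite $\langle au,v\rangle$ via the adjoint identity of part~(iii), subtract to get $\langle u,(\overline{a}^{\ast}-b)v\rangle=0$ for all $u,v$, and invoke non-degeneracy of the scalar product to conclude $(\overline{a}^{\ast}-b)v=0$, i.e.\ $b-\overline{a}^{\ast}\in\mathcal{J}_0$. The paper leaves the easy ``if'' direction implicit, whereas you spell it out; otherwise the arguments coincide.
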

\begin{proof}
The relations $\langle au, v\rangle = \langle u, \overline{a}^{\ast}v\rangle$ and $\langle au, v\rangle = \langle u, bv\rangle$ for all $u,v\in\mathcal{H}$ lead to $\langle u, (\overline{a}^{\ast}-b)v\rangle = 0$ for all $u,v\in\mathcal{H}$. From this we obtain $(\overline{a}^{\ast}-b)v = 0$ for all $v\in\mathcal{H}$.
\end{proof}
\begin{Lem}$\,$\vspace*{-0.2cm} \label{lem2.2}
\begin{enumerate}
\item[\rm (i)] If $a\in\mathcal{J}$, then $\overline{a}, a^{\ast}, \overline{a}^{\ast}\in\mathcal{J}$, too.
\item[\rm (ii)] If $a\in\mathcal{J}_0$, then $\overline{a}, a^{\ast}, \overline{a}^{\ast}\in\mathcal{J}_0$, too.
\end{enumerate}
\end{Lem}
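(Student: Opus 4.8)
The plan is to establish statement (ii) first and then deduce (i), since the substantive point is the closure of the two-sided ideal $\mathcal{J}_0$ under the three operations, whereas the corresponding statement for $\mathcal{J}$ will follow formally once we know how these operations act on the central idempotent $f_0$. Throughout I will use that $\overline{\cdot}$ is a ring automorphism and that $\cdot^\ast$ and $\overline{\cdot}^\ast$ are anti-automorphisms of $\mathbb{C}[\mathcal{S}_N]$, which follows from $(p\circ q)^{-1}=q^{-1}\circ p^{-1}$; note also that $\overline{\cdot}$, $\cdot^\ast$ and $\overline{\cdot}^\ast$ are involutions and commute with one another.

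For (ii) I would treat the maps $a\mapsto\overline a^\ast$ and $a\mapsto\overline a$ separately and obtain $a\mapsto a^\ast$ as a composite. Closure under $a\mapsto\overline a^\ast$ is immediate from the adjunction relation $\langle au\,|\,v\rangle=\langle u\,|\,\overline a^\ast v\rangle$: if $a\in\mathcal{J}_0$ then $au=0$ for every $u$, hence $\langle u\,|\,\overline a^\ast v\rangle=0$ for all $u,v\in\mathcal{H}$, and letting $u$ run through $\mathcal{H}$ forces $\overline a^\ast v=0$ for all $v$, i.e. $\overline a^\ast\in\mathcal{J}_0$. For closure under plain conjugation I would first record the intertwining identity $\overline{aw}=\overline a\,\overline w$, where $\overline w:=\sum_\sigma\overline{w_\sigma}\,|\sigma\rangle$ is coordinatewise conjugation on $\mathcal{H}$ in the basis $\mathcal{B}$; this holds because each permutation acts by $p\,|\sigma\rangle=|\sigma\circ p^{-1}\rangle$, i.e. by a real $0/1$ matrix, so conjugation commutes with the action term by term. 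Given $a\in\mathcal{J}_0$ and any $u\in\mathcal{H}$, writing $u=\overline{\overline u}$ gives $\overline a\,u=\overline a\,\overline{\overline u}=\overline{a\,\overline u}=\overline 0=0$, so $\overline a\in\mathcal{J}_0$. Finally $a^\ast=\overline{\overline a}^{\,\ast}$, so applying the first step to $\overline a\in\mathcal{J}_0$ yields $a^\ast\in\mathcal{J}_0$, completing (ii).

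For (i) I would exploit that $f_0$ is fixed by all three operations. Each of $\overline{\cdot}$, $\cdot^\ast$, $\overline{\cdot}^\ast$ carries central idempotents to central idempotents, and by (ii) each maps $\mathcal{J}_0$ into itself, hence onto itself by involutivity; therefore each sends $f_0$ to a central generating idempotent of $\mathcal{J}_0$. Since a two-sided ideal of $\mathbb{C}[\mathcal{S}_N]$ has a \emph{unique} central generating idempotent, we conclude $\overline{f_0}=f_0^\ast=\overline{f_0}^{\,\ast}=f_0$, and consequently $f:=\id-f_0$ is likewise fixed (using $\overline{\id}=\id^\ast=\id$). Now any $a\in\mathcal{J}=f\cdot\mathbb{C}[\mathcal{S}_N]$ satisfies $a=fa$ because $f$ is a central idempotent; applying the (anti)homomorphism property together with $\overline f=f^\ast=f$ then gives $\overline a=f\,\overline a$, $a^\ast=a^\ast f=f\,a^\ast$, and $\overline a^\ast=f\,\overline a^\ast$, so all three lie in $\mathcal{J}$.

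The only genuinely non-formal step is the closure of $\mathcal{J}_0$ under ordinary conjugation $a\mapsto\overline a$: the $\ast$-part is handed to us by the adjunction relation, but conjugation alone needs the observation that the permutations act by real matrices, which I package as the intertwiner $\overline{aw}=\overline a\,\overline w$. Once this identity is available, the remaining content—the deduction of $a\mapsto a^\ast$ in (ii) and the whole of (i)—is routine bookkeeping with the two commuting involutions and the uniqueness of the central idempotent $f_0$.
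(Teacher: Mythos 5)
Your proof of part (ii) is correct and follows essentially the same route as the paper: $\overline{a}^{\ast}\in\mathcal{J}_0$ via the adjunction relation $\langle au,v\rangle=\langle u,\overline{a}^{\ast}v\rangle$, $\overline{a}\in\mathcal{J}_0$ via the intertwiner $\overline{aw}=\overline{a}\,\overline{w}$ (the paper uses exactly this identity, implicitly relying on the permutations acting by real matrices), and $a^{\ast}$ obtained by composing the two involutions. The one genuine difference is that the paper proves only (ii) (its proof opens with ``We show (ii)'' and stops there), whereas you also supply an argument for (i): you deduce from (ii) and the uniqueness of the central generating idempotent that $\overline{f_0}=f_0^{\ast}=\overline{f_0}^{\ast}=f_0$, hence $f=\mathrm{id}-f_0$ is fixed, and then use $a=f\cdot a$ together with the (anti)homomorphism properties and centrality of $f$ to conclude $\overline{a},a^{\ast},\overline{a}^{\ast}\in f\cdot\mathbb{C}[\mathcal{S}_N]=\mathcal{J}$. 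That argument is sound (one does need, as you note, that $f_0^{\ast}$ is again a \emph{generating} idempotent of $\mathcal{J}_0$, which follows from $\ast$ mapping $\mathcal{J}_0$ onto itself by involutivity), and it fills in the half of the lemma the paper leaves unproved.
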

\begin{proof}
We show (ii). If $au = 0$ for all $u\in\mathcal{H}$, then $\overline{a}w = \overline{a}\,\overline{u} = \overline{au} = 0$ for all $w = \overline{u}\in\mathcal{H}$, i.e. $\overline{a}\in\mathcal{J}_0$. Further, the condition $au = 0$ for all $u\in\mathcal{H}$ leads to $\langle au, v\rangle = \langle u, \overline{a}^{\ast}v\rangle = 0$ for all $u, v\in\mathcal{H}$. Thus we obtain $\overline{a}^{\ast}v = 0$ for all $v\in\mathcal{H}$, i.e. $\overline{a}^{\ast}\in\mathcal{J}_0$. Finally, a combination of $\overline{a}, \overline{a}^{\ast}\in\mathcal{J}_0$ yields $a^{\ast} = \overline{\overline{a}^{\ast}}\in\mathcal{J}_0$.
\end{proof}

$\,$\\
Now it follows from Proposition \ref{prop2.1} and Lemma \ref{lem2.2}
\begin{Prop} A symmetry operator $a\in\mathcal{J}$ is self-adjoint iff $\overline{a}^{\ast} = a$.
\end{Prop}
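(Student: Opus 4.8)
The plan is to read off self-adjointness as the special case $b=a$ of Proposition \ref{prop2.1}, and then to remove the resulting ambiguity modulo $\mathcal{J}_0$ by invoking the direct-sum decomposition $\mathbb{C}[\mathcal{S}_N]=\mathcal{J}\oplus\mathcal{J}_0$.

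First I would dispose of the easy implication. If $\overline{a}^{\ast}=a$, then the fundamental relation $\langle au,v\rangle=\langle u,\overline{a}^{\ast}v\rangle$, which holds for every group ring element, immediately becomes $\langle au,v\rangle=\langle u,av\rangle$ for all $u,v\in\mathcal{H}$; hence $a$ is self-adjoint. Note that this direction does not even use the assumption $a\in\mathcal{J}$.

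For the converse, suppose $a\in\mathcal{J}$ is self-adjoint, i.e. $\langle au,v\rangle=\langle u,av\rangle$ for all $u,v\in\mathcal{H}$. This is precisely the hypothesis of Proposition \ref{prop2.1} with $b=a$, so that proposition yields $a=\overline{a}^{\ast}+c$ for some $c\in\mathcal{J}_0$, and the goal is to show $c=0$. Here is where the membership $a\in\mathcal{J}$ enters: by Lemma \ref{lem2.2}(i) the ideal $\mathcal{J}$ is stable under the operation $a\mapsto\overline{a}^{\ast}$, so $\overline{a}^{\ast}\in\mathcal{J}$ as well. Since $\mathcal{J}$ is a linear subspace, $c=a-\overline{a}^{\ast}\in\mathcal{J}$. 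But $c\in\mathcal{J}_0$ too, and $\mathcal{J}\cap\mathcal{J}_0=\{0\}$ because the sum $\mathbb{C}[\mathcal{S}_N]=\mathcal{J}\oplus\mathcal{J}_0$ is direct. Therefore $c=0$, which gives $\overline{a}^{\ast}=a$.

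There is no serious obstacle; the whole content is the bookkeeping observation that the only freedom left open by Proposition \ref{prop2.1} lives in $\mathcal{J}_0$, while both $a$ and $\overline{a}^{\ast}$ live in the complementary summand $\mathcal{J}$. The one point that must not be skipped is the appeal to Lemma \ref{lem2.2}(i): without knowing that $\overline{a}^{\ast}$ again lies in $\mathcal{J}$, one could not place the discrepancy $c$ inside $\mathcal{J}\cap\mathcal{J}_0$, and indeed the statement genuinely fails for general $a\in\mathbb{C}[\mathcal{S}_N]$, where self-adjointness pins $a$ down only modulo $\mathcal{J}_0$.
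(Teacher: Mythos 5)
Your proposal is correct and follows essentially the same route as the paper: apply Proposition \ref{prop2.1} with $b=a$, use Lemma \ref{lem2.2} to place $\overline{a}^{\ast}$ in $\mathcal{J}$, and conclude $c=a-\overline{a}^{\ast}\in\mathcal{J}\cap\mathcal{J}_0=\{0\}$. Your explicit remark that the hypothesis $a\in\mathcal{J}$ is genuinely needed (since self-adjointness only determines $a$ modulo $\mathcal{J}_0$) is a nice touch but does not change the argument.
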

\begin{proof}
According to Proposition \ref{prop2.1} $a$ is self-adjoint iff $a = \overline{a}^{\ast} + c$ with $c\in\mathcal{J}_0$. But since $a, \overline{a}^{\ast}\in\mathcal{J}$ we obtain also $c = a - \overline{a}^{\ast}\in\mathcal{J}$ such that $c = 0$.
\end{proof}
\begin{Def}
We say that a symmetry operator $a\in\mathbb{C}[\mathcal{S}_N]$ has property {\rm (S)} if $a$ satisfies the condition\\*[-0.4cm]
\parbox{15cm}{\[
\overline{a}^{\ast} = a\,.
\]}\hfill {\rm (S)}
\end{Def}
Every $a\in\mathbb{C}[\mathcal{S}_N]$ with property (S) is self-adjoint. However, an arbitrary symmetry operator $a= b + \tilde{b}\in\mathbb{C}[\mathcal{S}_N]$ with $b\in\mathcal{J}$, $\tilde{b}\in\mathcal{J}_0$ has only to fulfil $\overline{b}^{\ast} = b$ to be a self-adjoint operator. We investigate in our paper operators $a\in\mathbb{C}[\mathcal{S}_N]$ which have property (S).

Now we search for classes of symmetry operators which possess property (S).
\begin{Rem}
Obviously, Young symmetrizers $y_t\in\mathbb{C}[\mathcal{S}_N]$ do not have property {\rm (S)} in general. For instance, one can easily check that the Young symmetrizer $y_t = [1,2,3] + [2,1,3] - [3,1,2] - [3,2,1]\in\mathbb{C}[\mathcal{S}_3]$ of the standard tableau $t = \begin{array}{cc} {\scriptstyle 1} & {\scriptstyle 2} \\ {\scriptstyle 3} & \end{array}$ does not fulfil {\rm (S)}.
\end{Rem}
The following theorem yields a big manifold of symmetry operators with property (S).
\begin{Thm}
Let $G\subseteq\mathbb{C}[\mathcal{S}_N]$ be a subgroup and $\chi$ be an irreducible character of $G$. Then\footnote{$|G|$ denotes the cardinality of $G$.}
\begin{equation}
\chi := \frac{\chi(\mathrm{id})}{|G|}\,\sum_{p\in G} \chi(p)\,p \label{eq5}
\end{equation}
is an idempotent which satisfies {\rm (S)}, too.
\end{Thm}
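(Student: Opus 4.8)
The statement has two parts: that $\chi$ (the group-ring element built from the character) is an idempotent, and that it satisfies property (S). The plan is to handle these separately, drawing on the orthogonality relations for irreducible characters, which is the natural tool since $\chi$ is essentially a character projection in $\mathbb{C}[G]$.

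For the idempotent claim, I would compute $\chi\cdot\chi$ directly. Writing out the product,
\[
\chi\cdot\chi = \left(\frac{\chi(\mathrm{id})}{|G|}\right)^2 \sum_{p,q\in G} \chi(p)\chi(q)\,(p\circ q),
\]
I would reindex by setting $r = p\circ q$, so $q = p^{-1}\circ r$ and the coefficient of $r$ becomes $\bigl(\frac{\chi(\mathrm{id})}{|G|}\bigr)^2 \sum_{p\in G}\chi(p)\chi(p^{-1}r)$. The key computation is then to recognize the inner sum. The relevant identity is the convolution formula for irreducible characters, $\sum_{p\in G}\chi(p)\chi(p^{-1}r) = \frac{|G|}{\chi(\mathrm{id})}\chi(r)$, which is the standard second-orthogonality consequence (it expresses that the central character projection is idempotent up to the normalizing factor). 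Substituting this in collapses the double sum back to $\frac{\chi(\mathrm{id})}{|G|}\sum_{r}\chi(r)\,r = \chi$, giving $\chi\cdot\chi = \chi$.

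For property (S), I must verify $\overline{\chi}^{\ast} = \chi$. Applying the $\ast$-operation sends $p\mapsto p^{-1}$, and complex conjugation acts on the coefficients, so
\[
\overline{\chi}^{\ast} = \frac{\overline{\chi(\mathrm{id})}}{|G|}\sum_{p\in G}\overline{\chi(p)}\,p^{-1}
= \frac{\chi(\mathrm{id})}{|G|}\sum_{p\in G}\overline{\chi(p)}\,p^{-1},
\]
using that $\chi(\mathrm{id})$ is the real dimension of the representation. Reindexing the sum by $p^{-1}\mapsto p$ and invoking the elementary character identity $\overline{\chi(p^{-1})} = \chi(p)$ (valid for any character of a finite group, since character values are sums of roots of unity and conjugation inverts them) returns the sum to $\sum_{p\in G}\chi(p)\,p$, so $\overline{\chi}^{\ast} = \chi$.

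The main obstacle is the character-convolution identity used in the idempotent step; everything else is bookkeeping with reindexing. I would make sure to state that identity correctly with its normalization, since the factor $\chi(\mathrm{id})/|G|$ in the definition of $\chi$ is precisely calibrated to cancel against the $|G|/\chi(\mathrm{id})$ that appears in the orthogonality relation. It is worth emphasizing that $\chi$ is a priori only a central idempotent of $\mathbb{C}[G]$, but since $G\subseteq\mathcal{S}_N$ the inclusion $\mathbb{C}[G]\hookrightarrow\mathbb{C}[\mathcal{S}_N]$ makes it an idempotent of the larger ring as claimed, and the $\ast$-operation and conjugation are compatible with this inclusion.
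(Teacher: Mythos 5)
Your proposal is correct and follows essentially the same route as the paper: the paper's proof simply cites a standard proposition for the idempotence (whose usual proof is exactly your convolution identity $\sum_{p\in G}\chi(p)\chi(p^{-1}r)=\tfrac{|G|}{\chi(\mathrm{id})}\chi(r)$) and then invokes $\chi(p^{-1})=\overline{\chi(p)}$ together with $\chi(\mathrm{id})\in\mathbb{R}$ for property (S), which is precisely your second computation. You have merely unpacked the cited result into an explicit calculation; both the normalization bookkeeping and the reindexing $p\mapsto p^{-1}$ are handled correctly.
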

\begin{proof}
The assertion follows from \cite[Prop.~II.1.47]{fie16}, $\chi(p^ {-1}) = \overline{\chi(p)}$ and $\chi(\mathrm{id})\in\mathbb{R}$.
\end{proof}
\begin{Cor}
If $(G, \epsilon)$ is a commutation symmetry, then $\epsilon : G \rightarrow \mathbb{C}$ is a {\rm 1}-dimensional character of $G$ which is irreducible. Consequently, the idempotent $\epsilon := \frac{1}{|G|}\sum_{p\in G} \epsilon(p)\,p$ of every commutation symmetry has property {\rm (S)}.
\end{Cor}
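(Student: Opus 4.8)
The plan is to reduce the claim entirely to the preceding Theorem by recognizing the commutation homomorphism $\epsilon$ as a one-dimensional irreducible character of $G$ and then reading off that its degree is $1$. First I would observe that, by the definition of a commutation symmetry, $\epsilon: G \to \mathcal{S}^1$ is a group homomorphism, and since $\mathcal{S}^1 \subseteq \mathbb{C}\setminus\{0\} = \mathrm{GL}_1(\mathbb{C})$, the map $\epsilon$ is nothing but a one-dimensional complex matrix representation $\rho: G \to \mathrm{GL}_1(\mathbb{C})$, $\rho(p) = (\epsilon(p))$. Its character is the trace of the $1\times 1$ matrix $(\epsilon(p))$, which equals $\epsilon(p)$ itself, so the character of $\rho$ coincides with $\epsilon$. (Consistently, the finiteness of $G$ forces each $\epsilon(p)$ to be a root of unity, so indeed $\epsilon(p)\in\mathcal{S}^1$.)

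Next I would verify irreducibility, which is automatic: the representation space $\mathbb{C}$ is one-dimensional and therefore admits no proper nonzero invariant subspace, so $\rho$—and hence the character $\epsilon$—is irreducible. This settles the first assertion of the Corollary. It then remains to specialize the Theorem to the irreducible character $\chi = \epsilon$. Since $\rho$ has degree $1$, we have $\chi(\mathrm{id}) = \epsilon(\mathrm{id}) = 1$, so the idempotent produced by the Theorem is $\frac{\chi(\mathrm{id})}{|G|}\sum_{p\in G}\chi(p)\,p = \frac{1}{|G|}\sum_{p\in G}\epsilon(p)\,p$, which is precisely the idempotent $\epsilon$ attached to the commutation symmetry in Proposition \ref{prop2.14}(i). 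The Theorem guarantees that this element satisfies (S), which is exactly the claim.

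There is essentially no analytic obstacle here: the result is an immediate specialization of the Theorem, and the only points requiring care are the bookkeeping identifications—that a homomorphism into $\mathcal{S}^1$ is the same datum as a degree-one representation, that its character coincides with $\epsilon$, and that degree-one representations are trivially irreducible. Once these are in place, substituting $\chi(\mathrm{id}) = 1$ recovers the commutation-symmetry idempotent and transfers property (S) from the Theorem without further work.
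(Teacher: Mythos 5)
Your proof is correct and follows essentially the same route as the paper, which simply reduces the Corollary to the preceding Theorem by citing an external proposition (that a homomorphism into the unit circle is a one-dimensional, hence irreducible, character with $\chi(\mathrm{id})=1$). You have merely written out in full the standard verifications that the paper delegates to that citation.
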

\begin{proof}
The assertion follows from \cite[Prop.~II.1.48]{fie16}.
\end{proof}

$\,$\\
Already the commutation symmetries lead to a big manifold of symmetry operators with property (S). Let us consider the $\mathcal{S}_6$. A list of all commutation symmetries belonging to subgroups of $\mathcal{S}_N$ with $N\le 6$ is given in \cite[Appendix A.1]{fie16}. The $\mathcal{S}_6$ possesses 55 conjugacy classes of subgroups $G\not=\{\mathrm{id}\}$ (solvable and non-solvable). The number of solvable subgroups $G\not=\{\mathrm{id}\}$ of $\mathcal{S}_6$ is equal to 1428 (see \cite[III.2.2]{fie16}). On all theses subgroups one can define between 1 and 8 commutation symmetries. This yields a big number of operators with property (S). However, if we use irreducible character for the construction of operators with property (S), the number of such operators becomes still bigger.

\begin{Thm} \label{thm2.8}
Let $\mathcal{R}\subset\mathbb{C}[\mathcal{S}_N]$ be a minimal right ideal with $\mathcal{R}\not\subseteq\mathcal{J}_0$. {\rm(} $\mathcal{R}$ defines a symmetry class{\rm)}. Then the set of {\rm (}primitive{\rm )} generating idempotents of $\mathcal{R}$ contains one and only one idempotent $f$ with property {\rm (S)}. $f$ can be formed from an arbitrary generating idempotent $e$ of $\mathcal{R}$ by
\begin{equation}
f = \mu\,e\cdot \overline{e}^{\ast}\,,\;\;\;\mu\in\mathbb{R}\,. \label{eq6}
\end{equation}
\end{Thm}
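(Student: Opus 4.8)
The plan is to build everything on the map $a\mapsto\overline{a}^{\ast}=\sum_p\overline{a_p}\,p^{-1}$, which by the relation $\langle au,v\rangle=\langle u,\overline{a}^{\ast}v\rangle$ recorded above is exactly the Hilbert-space adjoint of $a$. I would first note three formal properties that are immediate from the definition: this map is conjugate-linear, it is an involution ($\overline{(\overline{a}^{\ast})}^{\ast}=a$), and it reverses products, $\overline{(ab)}^{\ast}=\overline{b}^{\ast}\,\overline{a}^{\ast}$. Thus property (S) says precisely that $a$ is a fixed point of this anti-involution, i.e.\ that $a$ is self-adjoint. The single analytic input driving the whole argument is the positivity of the canonical trace $\tau(a):=a_{\mathrm{id}}$ (the coefficient of the identity), namely $\tau(a\,\overline{a}^{\ast})=\sum_p|a_p|^{2}$, which is strictly positive whenever $a\neq 0$.

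For existence I would fix any generating idempotent $e$ of $\mathcal{R}$ and set $g:=e\,\overline{e}^{\ast}$. Then $g$ is self-adjoint, since $\overline{g}^{\ast}=\overline{(\overline{e}^{\ast})}^{\ast}\,\overline{e}^{\ast}=e\,\overline{e}^{\ast}=g$, and $g\neq 0$ because $\tau(g)=\sum_p|e_p|^{2}>0$. As $\mathcal{R}$ is minimal, $e$ is primitive, so in the semisimple algebra $\mathbb{C}[\mathcal{S}_N]$ over the algebraically closed field $\mathbb{C}$ the corner $e\,\mathbb{C}[\mathcal{S}_N]\,e$ is one-dimensional, $e\,\mathbb{C}[\mathcal{S}_N]\,e=\mathbb{C}\,e$. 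Hence $e\,\overline{e}^{\ast}e=\lambda\,e$ for some $\lambda\in\mathbb{C}$, and therefore
\[
g^{2}=e\,\overline{e}^{\ast}e\,\overline{e}^{\ast}=(e\,\overline{e}^{\ast}e)\,\overline{e}^{\ast}=\lambda\,e\,\overline{e}^{\ast}=\lambda\,g .
\]
Applying $\tau$ to $g^{2}=\lambda g$ and using $\tau(g^{2})=\tau(g\,\overline{g}^{\ast})=\sum_p|g_p|^{2}>0$ together with $\tau(g)>0$ forces $\lambda=\tau(g^{2})/\tau(g)$ to be real and strictly positive. Setting $\mu:=\lambda^{-1}\in\mathbb{R}$ and $f:=\mu\,e\,\overline{e}^{\ast}=\mu g$ then gives $f^{2}=\mu^{2}\lambda g=\mu g=f$, so $f$ is idempotent; $\overline{f}^{\ast}=\mu\overline{g}^{\ast}=\mu g=f$, so $f$ has property (S); and $f e=\mu\,e\,\overline{e}^{\ast}e=\mu\lambda\,e=e$ with $f\in\mathcal{R}$ shows $f\,\mathbb{C}[\mathcal{S}_N]=\mathcal{R}$. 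This yields both the existence assertion and formula (\ref{eq6}).

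For uniqueness I would take two generating idempotents $f_1,f_2$ of $\mathcal{R}$, both with property (S). From $f_1\mathbb{C}[\mathcal{S}_N]=f_2\mathbb{C}[\mathcal{S}_N]=\mathcal{R}$ and $f_2\in\mathcal{R}=f_1\mathbb{C}[\mathcal{S}_N]$ one gets $f_1f_2=f_2$, and symmetrically $f_2f_1=f_1$. Applying the anti-involution $a\mapsto\overline{a}^{\ast}$ to $f_1f_2=f_2$ gives $\overline{f_2}^{\ast}\,\overline{f_1}^{\ast}=\overline{f_2}^{\ast}$; since $\overline{f_i}^{\ast}=f_i$, the left-hand side equals $f_2f_1=f_1$ while the right-hand side equals $f_2$, whence $f_1=f_2$.

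The formal bookkeeping (that $a\mapsto\overline{a}^{\ast}$ is an anti-involution, and the corner identity $e\,\mathbb{C}[\mathcal{S}_N]\,e=\mathbb{C}\,e$ for primitive $e$) is standard; the one genuinely essential ingredient, and the step I would handle most carefully, is the positivity $\tau(a\,\overline{a}^{\ast})=\sum_p|a_p|^{2}$. It is exactly what guarantees $g\neq 0$ and pins the scalar $\lambda$ to be real and strictly positive, so that $\mu=\lambda^{-1}$ is a legitimate real normalising factor rather than merely a formal one. (The hypothesis $\mathcal{R}\not\subseteq\mathcal{J}_0$ plays no role in this purely algebraic statement; it only ensures that $\mathcal{R}$ defines a nontrivial symmetry class in $\mathcal{H}$.)
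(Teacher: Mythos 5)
Your proof is correct, and it follows a genuinely different route from the paper's. Where you work entirely inside the group ring, using the positive-definite trace form $\tau(a\,\overline{a}^{\ast})=\sum_p|a_p|^2$ both to see that $g=e\cdot\overline{e}^{\ast}\neq 0$ and to pin down $\lambda=\tau(g^2)/\tau(g)$ as real and strictly positive, the paper instead routes everything through the Hilbert space $\mathcal{H}$: non-vanishing of $e\cdot\overline{e}^{\ast}$ and of $e\cdot\overline{e}^{\ast}\cdot e$ is extracted from scalar products $\langle eu,eu\rangle$ with $u\in\mathcal{H}$, the reality of $\alpha$ is obtained by applying the anti-involution to $e\cdot\overline{e}^{\ast}\cdot e=\alpha\,e$ and comparing, and uniqueness is proved by showing that any two self-adjoint generating idempotents act identically on $\mathcal{H}_{\mathcal{R}}$ and on $\mathcal{H}_{\mathcal{R}}^{\perp}$, hence differ by an element of $\mathcal{J}_0\cap\mathcal{J}=\{0\}$. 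Your version buys two things: it shows the hypothesis $\mathcal{R}\not\subseteq\mathcal{J}_0$ is superfluous for the algebraic statement (the paper's argument genuinely needs it, both to find $u$ with $eu\neq 0$ and to kill $f_1-f_2\in\mathcal{J}_0$), and it gives the sharper conclusion $\mu>0$ rather than merely $\mu\in\mathbb{R}$; your uniqueness argument via $f_1f_2=f_2$, $f_2f_1=f_1$ and the anti-involution is also shorter and representation-free. What the paper's route buys in exchange is the physical interpretation: its uniqueness proof exhibits $f$ concretely as the orthogonal projector of $\mathcal{H}$ onto the symmetry class $\mathcal{H}_{\mathcal{R}}$, which is the picture the rest of the paper relies on.
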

\begin{Rem}
A corresponding statement holds true for minimal left ideals $\mathcal{L}\subset\mathbb{C}[\mathcal{S}_N]$. For left ideals one has to replace {\rm (\ref{eq6})} by $f = \mu\,\overline{e}^{\ast}\cdot e$.
\end{Rem}
\begin{Rem}
A statement similar to Theorem {\rm\ref{thm2.8}} was proven by H. Weyl in {\rm\cite[p.~295]{weyl4}}. We give here a new proof.
\end{Rem}
\begin{proof}
Since $\mathcal{R}$ is minimal and $\mathcal{R}\not\subseteq\mathcal{J}_0$ we have $\mathcal{R}\subseteq\mathcal{J}$. Let $e$ be a (primitive) generating idempotent of $\mathcal{R}$. Obviously, $h := e\cdot\overline{e}^{\ast}$ is a group ring element with property (S). (Note that $(a\cdot b)^{\ast} = b^{\ast}\cdot a^{\ast}$.) We show that $h$ is essentially idempotent.

Since $e\in\mathcal{J}$ there exists an $u\in\mathcal{H}$ such that $eu\not= 0$. Using the scalar product we obtain $\langle eu , eu\rangle\not= 0$ and $\langle u , (\overline{e}^{\ast}\cdot e)u\rangle\not= 0$. Consequently, we have $(\overline{e}^{\ast}\cdot e)u\not= 0$ and $\overline{e}^{\ast}\cdot e\not= 0$. The relation $e\cdot \overline{e}^{\ast}\not= 0$ can be proved when we start a similar consideration from $\langle \overline{e}^{\ast}v , \overline{e}^{\ast}v\rangle$, where $v\in\mathcal{H}$ is an element with $\overline{e}^{\ast}v\not= 0$.

Now we consider $0\not= \langle (\overline{e}^{\ast}\cdot e)u , (\overline{e}^{\ast}\cdot e)u\rangle = \langle eu , (e\cdot\overline{e}^{\ast}\cdot e)u\rangle$. This leads to $e\cdot\overline{e}^{\ast}\cdot e\not= 0$. If $e$ is a primitive idempotent then $e\cdot x\cdot e$ is proportional to $e$ for all $x\in\mathbb{C}[\mathcal{S}_N]$. Since $e\cdot\overline{e}^{\ast}\cdot e\not= 0$ we obtain $e\cdot\overline{e}^{\ast}\cdot e = \alpha\,e$ with $\alpha\not= 0$ and $h\cdot h = e\cdot\overline{e}^{\ast}\cdot e\cdot\overline{e}^{\ast} = \alpha (e\cdot\overline{e}^{\ast}) = \alpha\,h$. Thus $h$ ist essentially idempotent.

Next we show that $\alpha\in\mathbb{R}$. From $e\cdot\overline{e}^{\ast}\cdot e = \alpha\,e$ it follows $\overline{e}^{\ast}\cdot e\cdot\overline{e}^{\ast} = \overline{\alpha}\,\overline{e}^{\ast}$. If we multiply this relation from the left by $e$ we obtain $\alpha (\overline{e}^{\ast}\cdot e) = \overline{\alpha} (\overline{e}^{\ast}\cdot e)$ and $\alpha = \overline{\alpha}$. So $f:= \frac{1}{\alpha} h$ is an idempotent which has property (S) because $\alpha\in\mathbb{R}$. Finally $\mathcal{R}' := f\cdot\mathbb{C}[\mathcal{S}_N] = (e\cdot\overline{e}^{\ast})\cdot\mathbb{C}[\mathcal{S}_N]$ is a right ideal with $\mathcal{R}'\subseteq\mathcal{R}$. Since $\mathcal{R}'\ni e\cdot\overline{e}^{\ast}\not=0$ and $\mathcal{R}$ is minimal we obtain $\mathcal{R}' = \mathcal{R}$, i.e. $f$ generates $\mathcal{R}$.

Now we show the uniqueness of $f$. Assume $\mathcal{R}$ possesses two generating idempotents $f_1$, $f_2$ with property (S). We denote by $\mathcal{H}_{\mathcal{R}}$ the symmetry class defined be $\mathcal{R}$ and by $\mathcal{H}_{\mathcal{R}}^{\perp}$ the orthogonal complement of $\mathcal{H}_{\mathcal{R}}$ with respect to $\langle\cdot , \cdot\rangle$. Every idempotent $f_i$ satisfies $f_i x = x$ for all $x\in \mathcal{H}_{\mathcal{R}}$. Since every $x\in\mathcal{H}_{\mathcal{R}}$ has a structure $x = f_i z$ with $z\in\mathcal{H}$ we obtain
$0 = \langle y , f_i z \rangle = \langle \overline{f_i}^{\ast}y,z\rangle = \langle f_i y,z\rangle$ for all $y\in\mathcal{H}_{\mathcal{R}}^{\perp}$ and all $z\in\mathcal{H}$. This leads to $f_i y = 0$ for all $y\in\mathcal{H}_{\mathcal{R}}^{\perp}$. We see that both idempotents $f_1$ and $f_2$ have the same effect on $\mathcal{H}_{\mathcal{R}}$ and $\mathcal{H}_{\mathcal{R}}^{\perp}$. Consequently, we have $f_1 z = f_2 z$ for all $z\in\mathcal{H}$, from which follows $f_1 - f_2\in\mathcal{J}_0$. But since $\mathcal{R}\subseteq\mathcal{J}$, $f_1$ and $f_2$ lie in $\mathcal{J}$ and $f_1 - f_2 = 0$.
\end{proof}

\section{Decomposition of self-adjoint idempotents} \label{sec4}
In this section we present an algorithm for the decomposition of an idempotent $e$ with property (S) into pairwise orthogonal, primitive idempotents $f_i$ which have property (S), too.
\begin{Thm} \label{thm3.1}
Every idempotent $e\in\mathcal{J}$ with $\overline{e}^{\ast} = e$ has a decomposition
\begin{equation}
e = f_1 + f_2 +\ldots + f_l \label{eq7}
\end{equation}
into primitive idempotents $f_1,\ldots f_l$ which fulfill
\begin{equation}
\overline{f_i}^{\ast} = f_i\;\;\;\;and\;\;\;\;f_i\cdot f_j = 0\,,\;\;i\not=j\,.
\end{equation}
\end{Thm}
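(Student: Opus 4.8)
The plan is to build the decomposition inductively by repeatedly splitting off primitive self-adjoint idempotents, at each stage using the orthogonality structure to keep the remaining piece self-adjoint. First I would observe that the right ideal $\mathcal{R} := e\cdot\mathbb{C}[\mathcal{S}_N]$ decomposes into a direct sum of minimal right ideals $\mathcal{R} = \mathcal{R}_1\oplus\ldots\oplus\mathcal{R}_l$, since $\mathbb{C}[\mathcal{S}_N]$ is semisimple. Because $e\in\mathcal{J}$ and $\mathcal{J}\cap\mathcal{J}_0 = \{0\}$, none of the $\mathcal{R}_i$ lies in $\mathcal{J}_0$, so Theorem \ref{thm2.8} applies to each: every $\mathcal{R}_i$ has a unique generating primitive idempotent $f_i$ with property (S). The core of the argument is then to show that these canonical $f_i$ sum to $e$ and are pairwise orthogonal.

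For orthogonality, the key idea is to use the scalar product on $\mathcal{H}$ together with property (S). If $i\neq j$, then $\mathcal{R}_i$ and $\mathcal{R}_j$ are distinct minimal right ideals, so their symmetry classes $\mathcal{H}_{\mathcal{R}_i}$ and $\mathcal{H}_{\mathcal{R}_j}$ should be orthogonal with respect to $\langle\cdot,\cdot\rangle$; this is where I would invoke the relation $\langle au,v\rangle = \langle u,\overline{a}^{\ast}v\rangle$ from Proposition (iii). Concretely, for any $z\in\mathcal{H}$ the vector $f_j z$ lies in $\mathcal{H}_{\mathcal{R}_j}$, and just as in the uniqueness part of Theorem \ref{thm2.8} one shows $f_i y = 0$ for every $y\in\mathcal{H}_{\mathcal{R}_i}^{\perp}\supseteq\mathcal{H}_{\mathcal{R}_j}$. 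Using $\overline{f_i}^{\ast} = f_i$, this translates into $\langle f_i w, f_j z\rangle = \langle w, f_i f_j z\rangle = 0$ for all $w,z$, forcing $(f_i\cdot f_j)z = 0$ for all $z\in\mathcal{H}$, i.e. $f_i\cdot f_j\in\mathcal{J}_0$. Since $f_i\cdot f_j\in\mathcal{R}_i\subseteq\mathcal{J}$ and $\mathcal{J}\cap\mathcal{J}_0=\{0\}$, we conclude $f_i\cdot f_j = 0$ for $i\neq j$.

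With pairwise orthogonality in hand, the element $g := f_1+\ldots+f_l$ is automatically idempotent, it has property (S) as a sum of self-adjoint idempotents, and it generates the same right ideal $\mathcal{R}$ as $e$ (since each $f_i$ generates $\mathcal{R}_i$ and the $\mathcal{R}_i$ exhaust $\mathcal{R}$). Finally I would argue $g = e$ exactly as in the uniqueness step of Theorem \ref{thm2.8}: two generating idempotents of the same right ideal $\mathcal{R}\subseteq\mathcal{J}$, both with property (S), act identically on $\mathcal{H}_{\mathcal{R}}$ and annihilate $\mathcal{H}_{\mathcal{R}}^{\perp}$, so $g - e\in\mathcal{J}_0$, whence $g - e = 0$.

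The step I expect to be the main obstacle is establishing the orthogonality of the symmetry classes $\mathcal{H}_{\mathcal{R}_i}\perp\mathcal{H}_{\mathcal{R}_j}$ cleanly, since a priori a decomposition of $\mathcal{R}$ into minimal right ideals is not unique and the individual $\mathcal{R}_i$ need not be mutually orthogonal as subspaces of the group ring. The resolution is precisely to \emph{choose} the self-adjoint generators $f_i$ supplied by Theorem \ref{thm2.8}; property (S) is exactly the hypothesis that converts the algebraic relation $f_i\cdot f_j\in\mathcal{R}_i\cap(\text{annihilator of }\mathcal{R}_j)$ into the Hilbert-space orthogonality statement via the adjoint identity. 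Everything else is bookkeeping with idempotents and the decomposition $\mathbb{C}[\mathcal{S}_N] = \mathcal{J}\oplus\mathcal{J}_0$.
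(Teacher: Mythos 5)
There is a genuine gap at exactly the step you single out as the main obstacle, and the resolution you propose does not close it. You start from an \emph{arbitrary} decomposition $\mathcal{R}=\mathcal{R}_1\oplus\ldots\oplus\mathcal{R}_l$ into minimal right ideals and then assert $\mathcal{H}_{\mathcal{R}_j}\subseteq\mathcal{H}_{\mathcal{R}_i}^{\perp}$ for $i\neq j$. That is false in general, and choosing the canonical (S)-generators $f_i$ of the \emph{given} $\mathcal{R}_i$ does not repair it: property (S) makes the two conditions $f_i\cdot f_j=0$ and $\mathcal{H}_{\mathcal{R}_i}\perp\mathcal{H}_{\mathcal{R}_j}$ equivalent to each other, but it proves neither. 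To see the failure concretely, pass to one block $\mathbb{C}^{d_{\lambda}\times d_{\lambda}}$ via a DFT for which $D_{\lambda}(\overline{a}^{\ast})=\overline{D_{\lambda}(a)}^{T}$ (Young's orthogonal representation). Minimal right ideals of the block correspond to lines $V\subset\mathbb{C}^{d_{\lambda}}$ (all matrices with column space in $V$), and the unique generating idempotent with property (S) is the orthogonal projection onto $V$. For $d_{\lambda}=2$, $e=\mathrm{Id}$, $V_1=\mathrm{span}(e_1)$, $V_2=\mathrm{span}(e_1+e_2)$ one gets a legitimate direct sum decomposition into two minimal right ideals, yet $F_1=\left(\begin{array}{cc}1&0\\0&0\end{array}\right)$ and $F_2=\frac12\left(\begin{array}{cc}1&1\\1&1\end{array}\right)$ satisfy $F_1\cdot F_2\neq 0$ and $F_1+F_2\neq\mathrm{Id}$. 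So for an arbitrary decomposition neither the pairwise orthogonality nor the identity $\sum_i f_i=e$ holds; the existence of a decomposition whose symmetry classes are pairwise orthogonal is precisely what must be constructed.

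The paper does exactly that, by proceeding greedily instead of fixing all the $\mathcal{R}_i$ at once: it extracts one minimal right ideal $\mathcal{R}_1\subseteq\mathcal{R}$, takes its canonical idempotent $f_1$ with property (S) from Theorem \ref{thm2.8}, and shows that the remainder $r_1:=e-f_1$ is again an idempotent with property (S) satisfying $f_1\cdot r_1=r_1\cdot f_1=0$. The adjoint identity is applied there to the \emph{algebraic} relation $r_1\cdot f_1=0$ (which follows from $e\cdot f_1=f_1$), not to an unproved orthogonality of symmetry classes; orthogonality of all later pieces to $f_1$ then comes for free because the next minimal ideal is chosen inside $r_1\cdot\mathbb{C}[\mathcal{S}_N]$. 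Your closing steps (the sum of pairwise orthogonal (S)-idempotents is an (S)-idempotent, and two (S)-generators of the same right ideal in $\mathcal{J}$ coincide) are fine, but they rest on the orthogonality you have not established. To salvage your plan you would need to replace ``decompose $\mathcal{R}$ into minimal right ideals'' by the inductive construction of an orthogonal such decomposition --- which is the content of the paper's Algorithm \ref{alg3.2}.
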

\begin{proof}
The idempotent $e$ generates a right ideal $\mathcal{R} = e\cdot\mathbb{C}[\mathcal{S}_N]\subseteq\mathcal{J}$.
The decomposition (\ref{eq7}) can be constructed by the following
\begin{Alg}$\;$\\*[-0.5cm] \label{alg3.2}
\begin{enumerate}
\item[\rm (i)] Form a set $\mathcal{Y}$ of primitive idempotents $y\in\mathbb{C}[\mathcal{S}_N]$ such that $\mathbb{C}[\mathcal{S}_N] = \bigoplus_{y\in\mathcal{Y}} y\cdot\mathbb{C}[\mathcal{S}_N]$.
\item[\rm (ii)] Determine a $y_1\in\mathcal{Y}$ such that $e\cdot y_1\not= 0$. The group ring element $e\cdot y_1$ generates a minimal right ideal $\mathcal{R}_1 := e\cdot y_1\cdot\mathbb{C}[\mathcal{S}_N]$ with $\mathcal{R}_1\subseteq\mathcal{R}$.
\item[\rm (iii)] Determine a primitive generating idempotent $h_1$ of $\mathcal{R}_1$ from $e\cdot y_1$.
\item[\rm (iv)] Calculate the unique generating idempotent $f_1 = {\mu}_1\,h_1\cdot\overline{h_1}^{\ast}$ of $\mathcal{R}_1$.
\item[\rm (v)] Calculate the rest $r_1 := e - f_1$.
\item[\rm (vi)] Determine further idempotents $f_2, f_3,\ldots$ by iteration of the steps {\rm (ii), \ldots, (v)} starting with the rests $r_1, r_2,\ldots$ instead of $e$. Stop this iteration when a rest $r_l = 0$ was reached.
\end{enumerate}
\end{Alg}
The set $\mathcal{Y}$ could be constracted from the Young symmetrizers of all standard tableaux of $\mathbb{C}[\mathcal{S}_N]$. In step (ii) the set $\mathcal{Y}$ is always the original set $\mathcal{Y}$. We do not delete from $\mathcal{Y}$ the elements $y_1, y_2,\ldots$ found in previous steps (ii). Step (iii) can be carried out by a procedure given in \cite[Prop.I.2.1]{fie16}, \cite[Prop.1]{fie14} or \cite[Prop.4.1]{fie18}

Now we show that Algorithm \ref{alg3.2} produces the decomposition (\ref{eq7}). Obviously, the rest $r_1 = e - f_1$ has property (S). Further it follows $e\cdot f_1 = f_1$ from $\mathcal{R}_1\subseteq\mathcal{R}$. This leads to $r_1\cdot f_1 = 0$. Now we can carry out the calculation
\begin{equation}
\forall\,x,y\in\mathcal{H}:\;0 = \langle x,(r_1\cdot f_1)y\rangle = \langle r_1 x,f_1 y\rangle = \langle (f_1\cdot r_1) x,y\rangle\,, \label{eq9}
\end{equation}
which yield $(f_1\cdot r_1) x = 0$ (for all $x\in\mathcal{H}$) and $f_1\cdot r_1 = 0$ since $f_1, r_1\in\mathcal{J}$. From $f_1\cdot r_1 = 0$ we obtain $r_1\cdot r_1 = e\cdot r_1 - f_1\cdot r_1 = r_1 - 0 = r_1$, i.e. $r_1$ is an idempotent with property (S). Thus the start of a second iteration of {\rm (ii), \ldots, (v)} with $r_1$ instead of $e$ is correct. The result of this iteration are idempotents $f_2, r_2$ which fulfil $r_1 = f_2 + r_2$, $r_1\cdot f_2 = f_2$.

Consider the symmetry classes $\mathcal{H}_{\mathcal{R}_1}$, $\mathcal{H}_{\mathcal{R}_2}$, $\mathcal{H}_{\mathcal{R'}_1}$ of the right ideals $\mathcal{R}_1 = f_1\cdot\mathbb{C}[\mathcal{S}_N]$, $\mathcal{R}_2 = f_2\cdot\mathbb{C}[\mathcal{S}_N]$, $\mathcal{R'}_1 = r_1\cdot\mathbb{C}[\mathcal{S}_N]$. We obtain $\mathcal{R}_1\perp\mathcal{R'}_1$ from (\ref{eq9}) and $\mathcal{R}_2\subseteq\mathcal{R'}_1$ from $r_1\cdot f_2 = f_2$. Consequently, it holds $\mathcal{R}_1\perp\mathcal{R}_2$, too. Now a calculation similar to (\ref{eq9}) with $f_2$ instead of $r_1$ yields $f_1\cdot f_2 = f_2\cdot f_1 = 0$.

Since these considerations can be carried out in all iteration steps of Algorithm \ref{alg3.2} Theorem \ref{thm3.1} is correct.
\end{proof}

\section{Use of discrete Fourier transforms}
Computer calculations in big group rings $\mathbb{C}[\mathcal{S}_N]$ have high costs in calculation time and memory because of the high number of permutations in $\mathcal{S}_N$. One has to use a {\it discrete Fourier transform} for $\mathcal{S}_N$ to reduse these costs. (See \cite{clausbaum1} for discrete Fourier transforms of groups.)
\begin{Def}
A discrete Fourier transform for $\mathcal{S}_N$ is an isomorphism
\begin{eqnarray}
D:\;\mathbb{C}[\mathcal{S}_N] \;\rightarrow\; \bigotimes_{\lambda\vdash N} {\mathbb{C}}^{d_{\lambda}\times d_{\lambda}} & , &
D:\;a = \sum_{p\in\mathcal{S}_N} a_p\,p \;\mapsto\; D(a) =
\left(
\begin{array}{cccc}
A_{{\lambda}_1} &0 &0 &0 \\
0 & A_{{\lambda}_2} & 0 & 0 \\
\vdots & \vdots & \ddots & \vdots \\
0 & 0 & 0 & A_{{\lambda}_l} \\
\end{array}
\right)\hspace*{0.5cm}\; \label{eq10}
\end{eqnarray}
according to Wedderburn's theorem which maps the group ring $\mathbb{C}[\mathcal{S}_N]$ onto an outer direct product $\bigotimes_{\lambda\vdash N} {\mathbb{C}}^{d_{\lambda}\times d_{\lambda}}$ of full matrix rings ${\mathbb{C}}^{d_{\lambda}\times d_{\lambda}}$. We denote by $D_{\lambda}$ the natural projection $D_{\lambda}:\mathbb{C}[\mathcal{S}_N]\rightarrow {\mathbb{C}}^{d_{\lambda}\times d_{\lambda}}$, $D_{\lambda}(a) = A_{\lambda}$, belonging to the partition $\lambda\vdash N$ of $N$.
\end{Def}
The dimension $d_{\lambda}$ of the $d_{\lambda}\times d_{\lambda}$-block matrices $A_{\lambda}$ can be computed from $\lambda$ by means of the {\it hook length formula} (see e.g. \cite{boerner,full4,jameskerb,kerber} or \cite[p.38]{fie16}).

The group ring $\mathbb{C}[\mathcal{S}_N]$ decomposes into {\it minimal two-sided ideals} $\mathcal{Z}_{\lambda}$,
\begin{equation}
\mathbb{C}[\mathcal{S}_N] = \bigoplus_{\lambda\vdash N} \mathcal{Z}_{\lambda} = \bigoplus_{\lambda\vdash N} \mathbb{C}[\mathcal{S}_N]\cdot z_{\lambda}\,. \label{eq11}
\end{equation}
Every $\mathcal{Z}_{\lambda}$ is generated by a unique idempotent $z_{\lambda}$ which is {\it centrally primitive}. $z_{\lambda}$ can be calculated by means of (\ref{eq5}) from the irreducible character ${\chi}_{\lambda}$ belonging to $\lambda\vdash N$. Consequently every $z_{\lambda}$ has property (S).

The structure of block matrices in (\ref{eq10}) reflects the decomposition (\ref{eq11}) since $D_{\lambda}: \mathcal{Z}_{\lambda}\rightarrow {\mathbb{C}}^{d_{\lambda}\times d_{\lambda}}$ is an isomorphism.

Elements $a\in \mathbb{C}[\mathcal{S}_N]$ which lie in minimal left, right or two-sided ideals of $\mathbb{C}[\mathcal{S}_N]$, have only one non-vanishing block matrix in (\ref{eq10}). In particular this holds true for all primitive idempotents. The product $a\cdot b$ of such an element $a$ by another element $b$ is in $\mathbb{C}[\mathcal{S}_N]$ the product of two long sums $a = \sum_p a_p p$, $b = \sum_p b_p p$ of length $N!$, however in $\bigotimes_{\lambda\vdash N} {\mathbb{C}}^{d_{\lambda}\times d_{\lambda}}$ only the product of two $d_{\lambda}\times d_{\lambda}$-matrices $A_{\lambda}\cdot B_{\lambda}$ which are much smaller. This leads to a considerable reduction of computation costs.

The steps (i)-(iii) of Algorithm \ref{alg3.2} were taken from another ideal decomposition algorithm which we described in \cite[Chap.I.2]{fie16} or \cite{fie14,fie18}. The algorithm from \cite{fie16,fie14,fie18} runs both in $\mathbb{C}[\mathcal{S}_N]$ and in $\bigotimes_{\lambda\vdash N} {\mathbb{C}}^{d_{\lambda}\times d_{\lambda}}$ without a necessity of a Fourier transformation between these two spaces during the run of the algorithm. The version in $\bigotimes_{\lambda\vdash N} {\mathbb{C}}^{d_{\lambda}\times d_{\lambda}}$ is much more efficient than the version in $\mathbb{C}[\mathcal{S}_N]$.

Because of the relationship between Algorithm \ref{alg3.2} and the algorithm from \cite{fie16,fie14,fie18} one can transfer Algorithm \ref{alg3.2} to $\bigotimes_{\lambda\vdash N} {\mathbb{C}}^{d_{\lambda}\times d_{\lambda}}$ by means of the remarks in \cite[p.46]{fie16}, \cite[Sec.5]{fie14}, \cite[pp.9-10]{fie18}.

Only the computation of $a^{\ast}$ for an element $a\in\mathbb{C}[\mathcal{S}_N]$ can not be transfered to $\bigotimes_{\lambda\vdash N} {\mathbb{C}}^{d_{\lambda}\times d_{\lambda}}$ in a simple way. The calculation of the matrix $B = D(a^{\ast})$ from a matrix $A = D(a)$ can not be carried out in $\bigotimes_{\lambda\vdash N} {\mathbb{C}}^{d_{\lambda}\times d_{\lambda}}$ without Fourier transformations between $\bigotimes_{\lambda\vdash N} {\mathbb{C}}^{d_{\lambda}\times d_{\lambda}}$ and $\mathbb{C}[\mathcal{S}_N]$. In the next sections we present algorithms for the efficient computation of $B$ from $A$.
\begin{Rem}
For $\mathcal{S}_N$ three discrete Fourier transforms are known:
\begin{enumerate}
\item[\rm (a)] Young's natural representation of $\mathcal{S}_N$ {\rm (}see {\rm\cite[p.~51]{fie16})},
\item[\rm (b)] Young's semi-normal representation of $\mathcal{S}_N$ {\rm (}see {\rm\cite[p.~55]{fie16})},
\item[\rm (c)] Young's orthogonal representation of $\mathcal{S}_N$ {\rm (}see {\rm\cite[pp.~133-135]{boerner2})}.
\end{enumerate}
The {\rm DFT} {\rm (b)} is the basis of the fast Fourier transform for $\mathcal{S}_N$ by M. Clausen and U. Baum {\rm\cite{clausbaum1}}. We use {\rm (a)} as discrete Fourier transform in our {\sf Mathematica} package {\sf PERMS} {\rm\cite{fie10}}.

The three {\rm DFT} map permutations $p\in\mathcal{S}_N$ to following types of matrices{\rm :}
\begin{center}
{\rm
\begin{tabular}{c|c|c|c}
 & (a) & (b) & (c) \\
\hline
$D(p)$ & integer matrix & rational matrix & real matrix \\
\end{tabular}
}
\end{center}
If we want to calculate $D(a)$ of an $a\in\mathbb{C}[\mathcal{S}_N]$ whose coefficients are only integers or rational numbers then {\rm (a)} and {\rm (b)} allow calculations not only with a certain numerical precision but with infinite precision. In Section {\rm\ref{sec5}} we will see that an infinite precision has not  an advantage in any case. If a calculation lead to such big integers which a computer program can not completely decompose into primes than a strong growth of integers can arise and cause memory problems.
\end{Rem}

\section{Calculation of $D(a^{\ast})$ from $D(a)$} \label{sec5}
Now we investigate the question how one can calculate the matrix $B = D(a^{\ast})\in\bigotimes_{\lambda\vdash N} {\mathbb{C}}^{d_{\lambda}\times d_{\lambda}}$ from the matrix $A = D(a)\in\bigotimes_{\lambda\vdash N} {\mathbb{C}}^{d_{\lambda}\times d_{\lambda}}$ without a discrete Fourier transformation between $\bigotimes_{\lambda\vdash N} {\mathbb{C}}^{d_{\lambda}\times d_{\lambda}}$ and $\mathbb{C}[\mathcal{S}_N]$.
\begin{Prop} \label{prop5.1}
Every $a\in\mathbb{C}[\mathcal{S}_N]$ has a unique decomposition
\begin{equation} \label{eq12}
a = \sum_{\lambda\vdash N} a_{\lambda}\;\;\;where\;\;\;a_{\lambda} = a\cdot z_{\lambda}\in\mathcal{Z}_{\lambda}\,.
\end{equation}
The $a_{\lambda}$ fulfil
\begin{equation}
a_{\lambda}^{\ast} = a^{\ast}\cdot z_{\lambda}\in\mathcal{Z}_{\lambda}\,.
\end{equation}
\end{Prop}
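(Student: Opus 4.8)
The plan is to derive both assertions from the standard structural properties of the centrally primitive idempotents $z_{\lambda}$, so the first task is to record those properties. Because $\mathbb{C}[\mathcal{S}_N] = \bigoplus_{\lambda\vdash N}\mathcal{Z}_{\lambda}$ in (\ref{eq11}) is a decomposition into two-sided ideals with centrally primitive generating idempotents, the $z_{\lambda}$ form a complete system of orthogonal central idempotents: $z_{\lambda}\cdot z_{\mu} = \delta_{\lambda\mu}\,z_{\lambda}$ and $\sum_{\lambda\vdash N} z_{\lambda} = \mathrm{id}$. Centrality means $z_{\lambda}$ commutes with every $b\in\mathbb{C}[\mathcal{S}_N]$, so that $\mathcal{Z}_{\lambda} = \mathbb{C}[\mathcal{S}_N]\cdot z_{\lambda} = z_{\lambda}\cdot\mathbb{C}[\mathcal{S}_N]$.

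For the decomposition (\ref{eq12}) I would simply write $a = a\cdot\mathrm{id} = \sum_{\lambda} a\cdot z_{\lambda} = \sum_{\lambda} a_{\lambda}$, where each $a_{\lambda} = a\cdot z_{\lambda}$ lies in $\mathcal{Z}_{\lambda}$. Uniqueness is immediate from the directness of the sum in (\ref{eq11}); alternatively one can verify it by hand: if $a = \sum_{\lambda} b_{\lambda}$ with $b_{\lambda}\in\mathcal{Z}_{\lambda}$, then since $b_{\lambda} = b_{\lambda}\,z_{\lambda}$ (because $z_{\lambda}$ acts as identity on $\mathcal{Z}_{\lambda}$) one gets $b_{\lambda}\,z_{\mu} = \delta_{\lambda\mu}\,b_{\mu}$, so multiplying by $z_{\mu}$ yields $b_{\mu} = a\cdot z_{\mu} = a_{\mu}$.

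The second assertion carries the only real content, and the key intermediate fact is $z_{\lambda}^{\ast} = z_{\lambda}$. I would apply the anti-automorphism property $(a\cdot b)^{\ast} = b^{\ast}\cdot a^{\ast}$ (already used in the proof of Theorem~\ref{thm2.8}) to obtain $a_{\lambda}^{\ast} = (a\cdot z_{\lambda})^{\ast} = z_{\lambda}^{\ast}\cdot a^{\ast}$. To see $z_{\lambda}^{\ast} = z_{\lambda}$, I would combine property (S) of $z_{\lambda}$, namely $\overline{z_{\lambda}}^{\ast} = z_{\lambda}$, with the observation that $z_{\lambda}$ has real coefficients: by (\ref{eq5}) the coefficient of $p$ is $\frac{\chi_{\lambda}(\mathrm{id})}{N!}\,\chi_{\lambda}(p)$, and the irreducible characters of $\mathcal{S}_N$ are real-valued (every permutation is conjugate to its inverse), so $\overline{z_{\lambda}} = z_{\lambda}$ and hence $z_{\lambda}^{\ast} = \overline{z_{\lambda}}^{\ast} = z_{\lambda}$. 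Equivalently, reindexing $p\mapsto p^{-1}$ in (\ref{eq5}) and using $\chi_{\lambda}(p^{-1}) = \overline{\chi_{\lambda}(p)} = \chi_{\lambda}(p)$ gives the same conclusion directly. With $z_{\lambda}^{\ast} = z_{\lambda}$ and centrality, $a_{\lambda}^{\ast} = z_{\lambda}\cdot a^{\ast} = a^{\ast}\cdot z_{\lambda}$, and this element lies in $\mathcal{Z}_{\lambda} = \mathbb{C}[\mathcal{S}_N]\cdot z_{\lambda}$ precisely because $z_{\lambda}$ is central.

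The only genuine (and still modest) obstacle is establishing $z_{\lambda}^{\ast} = z_{\lambda}$, i.e.\ pinning down the behaviour of $z_{\lambda}$ under the pure $\ast$-operation rather than under (S); everything else is a routine consequence of the orthogonal central-idempotent structure already displayed in (\ref{eq11}).
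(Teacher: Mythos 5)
Your proof is correct and follows essentially the same route as the paper: the decomposition is read off from (\ref{eq11}), and the second assertion from $z_{\lambda}^{\ast}=z_{\lambda}$ together with the centrality of $z_{\lambda}$. You are in fact slightly more careful than the paper, which invokes only property (S) of $z_{\lambda}$ and silently uses the realness of $\chi_{\lambda}$ to pass from $\overline{z_{\lambda}}^{\ast}=z_{\lambda}$ to $z_{\lambda}^{\ast}=z_{\lambda}$.
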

\begin{proof}
(\ref{eq12}) follows from (\ref{eq11}). Since $z_{\lambda}$ has property (S) and commutes with every element from $\mathbb{C}[\mathcal{S}_N]$ we have furthermore $a_{\lambda}^{\ast} = z_{\lambda}^{\ast}\cdot a^{\ast} = a^{\ast}\cdot z_{\lambda}$.
\end{proof}

Because of Proposition \ref{prop5.1} we can restrict us to
\begin{Prbl} \label{probl5.2}
Determine a matrix-valued function $f:{\mathbb{C}}^{d_{\lambda}\times d_{\lambda}}\rightarrow {\mathbb{C}}^{d_{\lambda}\times d_{\lambda}}$ which fulfils
\begin{equation}
f(A) = B\;\;\;\;\Leftrightarrow\;\;\;\;\exists\,a\in\mathcal{Z}_{\lambda}:\;A = D_{\lambda}(a)\;\wedge\;B = D_{\lambda}(a^{\ast}) \label{eq14}
\end{equation}
\end{Prbl}
Obviously, a function (\ref{eq14}) exists for every $\mathcal{Z}_{\lambda}$. Every such $f$ is a linear function defined on the whole of ${\mathbb{C}}^{d_{\lambda}\times d_{\lambda}}$. If we can determine $f$ we obtain a fast algorithm for the computation of $B$ from $A$ without a Fourier transformation between $\bigotimes_{\lambda\vdash N} {\mathbb{C}}^{d_{\lambda}\times d_{\lambda}}$ and $\mathbb{C}[\mathcal{S}_N]$.

The simplest algorithm which gives a solution of Problem \ref{probl5.2} is
\begin{Alg} Let be given a $(d_{\lambda}\times d_{\lambda})$-matrix $A$ of symbols representing a natural projection $D_{\lambda}(a)$ of an arbitrary element $a\in\mathcal{Z}_{\lambda}$.
\begin{enumerate}
\item[\rm (i)] Calculate $a = (D_{\lambda}|_{\mathcal{Z}_{\lambda}})^{-1}(A)$.
\item[\rm (ii)] Calculate $a^{\ast}$ from $a$.
\item[\rm (iii)] Calculate $B = D_{\lambda}(a^{\ast})$.
\end{enumerate}
$B$ is a $(d_{\lambda}\times d_{\lambda})$-matrix of linear functions of the elements of $A$. The mapping $f:A\mapsto B$ is the solution of Problem {\rm\ref{probl5.2}}. \label{alg5.3}
\end{Alg}
Algorithm \ref{alg5.3} needs a computer algebra system such as {\sf Mathematica} which allows symbolic computations. It can lead to high costs in calculation time because it carries out a Fourier transformations of a relatively small $(d_{\lambda}\times d_{\lambda})$-matrix $A$ to a long group ring element $a$ of length $N!$ and a second Fourier transformation of a group ring element $a^{\ast}$ of length $N!$ to a $(d_{\lambda}\times d_{\lambda})$-matrix $B$. However these costs arise only once when we run Algorithm \ref{alg5.3}. When we have determined the function $f$ then we can very fast compute $B$ from $A$ for all $A\in {\mathbb{C}}^{d_{\lambda}\times d_{\lambda}}$.

The costs in memory for Algorithm \ref{alg5.3} are smaller than for the other algorithms presented here. Using Algorithm \ref{alg5.3} one runs rarely the risk of memory problems.

A second type of algorithms solving Problem \ref{probl5.2} bases on the following consideration. For every $\mathcal{Z}_{\lambda}$ the set $\{ p\cdot z_{\lambda}\;|\;p\in\mathcal{S}_N\}$ is a generating system of the vector space $\mathcal{Z}_{\lambda}$. We can determine a subset $\mathcal{P}_{\lambda}\subset \mathcal{S}_N$ of permutations such that $\mathcal{B}_{\lambda} := \{ p\cdot z_{\lambda}\;|\;p\in\mathcal{P}_{\lambda}\}$ is a basis of $\mathcal{Z}_{\lambda}$. If $\mathcal{P}_{\lambda}$ is known we can write every $a\in\mathcal{Z}_{\lambda}$ in the following form
\begin{equation}
a = a\cdot z_{\lambda} = \sum_{p\in\mathcal{S}_N} a_p\,p\cdot z_{\lambda} = \sum_{p\in\mathcal{P}_{\lambda}} {\alpha}_p\,p\cdot z_{\lambda} = \alpha\cdot z_{\lambda}\;\;\;\mathrm{with}\;\;\;\alpha := \sum_{p\in\mathcal{P}_{\lambda}} {\alpha}_p\,p\,.
\end{equation}
It holds true for $a$, $\alpha$ and $z_{\lambda}$
\begin{equation}
D_{\lambda}(a) = D_{\lambda}(\alpha)\,,\;\;\;\;D_{\lambda}(z_{\lambda}) = \mathrm{Id}\,,\;\;\;\;a^{\ast} = z_{\lambda}^{\ast}\cdot {\alpha}^{\ast} = {\alpha}^{\ast}\cdot z_{\lambda}\,.
\end{equation}
\begin{Def}
\begin{enumerate}
\item[\rm (i)] Let $\alpha = \sum_{p\in\mathcal{P}_{\lambda}} {\alpha}_p\,p$ be a group ring element. We denote by $v[{\alpha}_p]$ the column vector in which the ${\alpha}_p$ were arranged according to the lexicographical order of permutations.
\item[\rm (ii)] Let $A = (A_{ij})$ be a matrix. We denote by $v[A_{ij}]$ the column vector in which the elements $A_{ij}$ were written line by line.
\end{enumerate}
\end{Def}
Now we formulate our second algorithm which solves Problem \ref{probl5.2}.
\begin{Alg}
Let $A = (A_{ij})$ be a symbolic $(d_{\lambda}\times d_{\lambda})$-matrix and $\alpha = \sum_{p\in\mathcal{P}_{\lambda}} {\alpha}_p\,p$ be a group ring element with symbolic coefficients ${\alpha}_p$.
\begin{enumerate}
\item[\rm (i)] Determine the coefficient matrix $\Phi$ of the equation
\begin{equation}
v\left[D_{\lambda}(\alpha)\right] = \Phi\cdot v[{\alpha}_p]\,.
\end{equation}
\item[\rm (ii)] Calculate the inverse matrix ${\Phi}^{-1}$.
\item[\rm (iii)] Determine the coefficient matrix $\Psi$ of the equation
\begin{equation}
v\left[D_{\lambda}({\alpha}^{\ast})\right] = \Psi\cdot v[{\alpha}_p]\,.
\end{equation}
\item[\rm (iv)] Form the function
\begin{equation}
\tilde{f}:\,v[A_{ij}]\mapsto \Psi\cdot {\Phi}^{-1}\cdot v[A_{ij}]\,.
\end{equation}
Arrange the elements of the vectors $v[A_{ij}]$ and $\Psi\cdot {\Phi}^{-1}\cdot v[A_{ij}]$ in $(d_{\lambda}\times d_{\lambda})$-matrices to obtain the function $f: {\mathbb{C}}^{d_{\lambda}\times d_{\lambda}}\rightarrow {\mathbb{C}}^{d_{\lambda}\times d_{\lambda}}$.
\end{enumerate} \label{alg5.5}
\end{Alg}
It is clear that Algorithm \ref{alg5.5} yields the function $f$ searched in Problem \ref{probl5.2}. Note that $\mathcal{P}_{\lambda} = \dim\mathcal{Z}_{\lambda} = d_{\lambda}^2$. So both vector $v[{\alpha}_p]$ and vector $v[A_{ij}]$ has length $d_{\lambda}^2$ and $\Phi$ and $\Psi$ are quadratic $(d_{\lambda}^2\times d_{\lambda}^2)$-matrices. $\Phi$ has an inverse matrix because $\mathcal{B}_{\lambda}$ is a basis of $\mathcal{Z}_{\lambda}$ and $D_{\lambda}|_{\mathcal{Z}_{\lambda}}: \mathcal{Z}_{\lambda}\rightarrow {\mathbb{C}}^{d_{\lambda}\times d_{\lambda}}$ is an isomorphism. The use of $v[{\alpha}_p]$ in the right-hand side is correct because $\alpha$ and ${\alpha}^{\ast}$ have the same coefficients. Only the places of these coefficients are different in $\alpha$ and ${\alpha}^{\ast}$.

By means of Algorithm \ref{alg5.5} we calculated the functions $f$ for all partitions $\lambda\vdash 5$, $\lambda\vdash 6$ and all partitions $\lambda\vdash 7$, $\lambda\not= (4\,2\,1), (3\,2\,1^2)$. The following table shows some information concerning the calculations in $\mathcal{S}_7$.
\begin{center}
\begin{tabular}{l|r|r|r|r|r|r}
$\lambda$ & $d_{\lambda}$ & $d_{\lambda}^2$ & $d_{\lambda}^4$ & time & RAM & f\\
\hline
$(6\,1)$ & 6      & 36  & 1296   & 0.6s   &         & 13KB\\
$(5\,2)$ & 14     & 196 & 38416  & 28.6s  &         & 506KB\\
$(5\,1^2)$ & 15   & 225 & 50625  & 31.2s  &         & 182KB\\
$(4\,3)$ & 14     & 196 & 38416  & 35.9s  &         & 494KB\\
$(4\,1^3)$ & 20   & 400 & 160000 & 278.8s & 131.7MB & 404KB\\
$(3^3\,1)$ & 21   & 441 & 194481 & 615.5s & 166.9MB & 2149KB\\
$(3\,2^2)$ & 21   & 441 & 194481 & 624.3s & 166.9MB & 2187KB\\
$(3\,1^4)$ & 15   & 225 & 50625  & 31.3s  & 34.6MB  & 182KB\\
$(2^3\,1)$ & 14   & 196 & 38416  & 36.1s  & 34.8MB  & 471KB\\
$(2^2\,1^3)$ & 14 & 196 & 38416  & 30.7s  & 33.6MB  & 433KB\\
$(2\,1^5)$ & 6    & 36  & 1296   & 0.6s   & 14.7MB  & 13KB\\
\end{tabular}
\end{center}
The colums of $d_{\lambda}^2$ and $d_{\lambda}^4$ shows the number of elements in $v[{\alpha}_p]$, $v[A_{ij}]$ and $\Phi$, $\Psi$, respectively. 'time' and 'RAM' give the computation time and the memory, respectively, which the algorithm needed. 'f' shows the length of the file in which the function $f$ was stored.

For the partitions $\lambda = (4\,2\,1), (3\,2\,1^2)$ a big memory problem arose from the use of exact integers in the calculation. It turned out, that for instance the matrix $\Phi$ for $\lambda = (4\,2\,1)$ has the determinant
\begin{center}
{\scriptsize
\begin{tabular}{r}
-96974257961965049074024912099372004573795424708044922073363953457157880990336620245014730563217365\\
    946027469537574499367764896357932972629577610678133530230415476788046519825483493859992633559136\\ 
   303784510251434033116082118775724498148053570956977145676159650656795201937451001664676330006575\\
   437913887075107664190746396865109859747118150041195008145445396420723507449624828927005702309095\\ 
   274295858496179822968799982444105484313810897703096007568025010485731184828349379597074706284167\\ 
   453662581507561255480875750071503321884946609362947141428351638033559982301650092032000000000000\\ 
   000000000000000\\
(593 digits)\\
\end{tabular}
}
\end{center}
During the calculation of ${\Phi}^{-1}$ rational numbers are generated whose numerators and denominators are such large integers as the above determinat. The computer algebra system is no longer able to find all prime factors of these integers and to reduce the rational numbers. So the integers become longer and longer and the available memory is exceeded.

We used Algorithm \ref{alg5.3} for the determination of $f$ for $\lambda = (4\,2\,1), (3\,2\,1^2)$. The calculation had the following characteristics:
\begin{center}
\begin{tabular}{l|r|r|r|r|r|r}
$\lambda$ & $d_{\lambda}$ & $d_{\lambda}^2$ & $d_{\lambda}^4$ & time & RAM & f\\
\hline
$(4\,2\,1)$   & 35 & 1225 & 1500625 & 4.393h & 440.8MB & 13974KB \\
$(3\,2\,1^2)$ & 35 & 1225 & 1500625 & \\
\end{tabular}
\end{center}

\section{Determination of the sets $\mathcal{P}_{\lambda}$}
In this section we present an algorithm which determines the sets $\mathcal{P}_{\lambda}$ for the Algorithm \ref{alg5.5} (see Figure 1).
\begin{figure}
\begin{center}
\fbox{
\parbox{12cm}{
\begin{tabbing}
{\bf output:} \=x \kill \\
{\bf input:} \>$\lambda\vdash N$\\*[0.2cm]
{\bf output:} \>$\mathcal{P}_{\lambda}$\\
\end{tabbing}
\begin{tabbing}
{\bf beg}\={\bf while} \={\bf then} \=x\kill
{\bf begin} \\
\>$\xi := (x_1, x_2,\ldots, x_{d_{\lambda}^2})$;\\
\>$p := \mathrm{id}$;\\
\>$\mathcal{P}_{\lambda} := \{\mathrm{id}\}$;\\
\>$\tau(\xi) := \xi\cdot v[D_{\lambda}(p)]$;\\
\>{\bf solve} $\tau(\xi) = 0\;\;\Rightarrow\;\;${\bf result}: $x_s = \sigma($rest of the $x_i)$;\\
\>$\xi := \xi \leftarrow$ {\bf substitute} $x_s = \sigma($rest of the $x_i)$;\\
\>{\bf while} $|\mathcal{P}_{\lambda}| < d_{\lambda}^2$ {\bf do}\\
\> \>$p :=$ {\bf next-permutation}$(p)$;\\
\>\>$\tau(\xi) := \xi\cdot v[D_{\lambda}(p)]$;\\
\>\>{\bf if} \>$\tau(\xi)\not\equiv 0$ \\
\>\>{\bf then} \>{\bf solve} $\tau(\xi) = 0\;\;\Rightarrow\;\;${\bf result}: $x_s = \sigma($rest of the $x_i)$;\\
\>\>\>$\xi := \xi \leftarrow$ {\bf substitute} $x_s = \sigma($rest of the $x_i)$;\\
\>\>\>$\mathcal{P}_{\lambda} := \mathcal{P}_{\lambda}\cup\{p\}$;\\
\>\>{\bf fi};\\
\>{\bf od};\\
\>{\bf return} $\mathcal{P}_{\lambda}$;\\
{\bf end};
\end{tabbing}
}
}
\end{center}
\caption{Algorithm for determining of $\mathcal{P}_{\lambda}$}
\end{figure}

We can not simply consider the set $\{p\cdot z_{\lambda}\;|\;p\in\mathcal{S}_N\}$ und determine $\dim\mathcal{Z}_{\lambda} = d_{\lambda}^2$ linearly independent vectors in this set, because this set contains $N!$ vectors from $\mathbb{C}[\mathcal{S}_N]$ of length $N!$. This data set is very large.

The algorithm calculates
\begin{equation}
D_{\lambda}(p\cdot z_{\lambda}) = D_{\lambda}(p)
\end{equation}
for permutations $p\in\mathcal{S}_N$ and checks whether these matrices are linearly independent.

The algorithm starts with the identity permutation $p = \mathrm{id}\in\mathcal{S}_N$ and $\mathcal{P}_{\lambda} = \{\mathrm{id}\}$. To characterize the 1-dimensional linear space spanned by $D_{\lambda}(\mathrm{id})$, the algorithm uses a vector $\xi = (x_1,\ldots , x_{d_{\lambda}^2})$ of variables $x_i$.

The algorithm forms the scalar product $\tau(\xi) = \xi\cdot v[D_{\lambda}(\mathrm{id})]$ of the vectors $\tau(\xi)$ and $[D_{\lambda}(\mathrm{id})]$. $\tau(\xi)$ is a linear function. The algorithm determines one of the variables $x_i$ from the equation $\tau(\xi) = 0$ and obtains a relation $x_s = \sigma($rest of the $x_i)$. By means of this relation the variable $x_s$ is eliminated in $\xi$. We obtain a vector $\xi$ which fulfil $\xi\cdot w = 0$ exactly for the elements $w$ of the 1-dimensional span of $v[D_{\lambda}(\mathrm{id})]$.

Now the algorithm investigates the next permutation $p$. To select $p$, the algorithm uses a procedure {\bf next-permutation} which determines for a given permutation $q\in\mathcal{S}_N$ the successor $p\in\mathcal{S}_N$ according to the lexicographical order of permutations. Then the algorithm checks the condition
\begin{equation}
\tau(\xi) := \xi\cdot v[D_{\lambda}(p)] = 0\,. \label{eq21}
\end{equation}
If (\ref{eq21}) is satisfied, then $p$ is canceled. If, however, (\ref{eq21}) is not fulfilled then a new linear equation $\tau(\xi) = 0$ arises which the algorithm uses to eliminate a further variable in $\xi$. $\xi$ caracterizes now the span of two linearly independent vectors $v[D_{\lambda}(\mathrm{id})]$, $v[D_{\lambda}(p)]$. The new found permutation $p$ ist added to the set $\mathcal{P}_{\lambda}$. This procedure is iterated until the cardinality of $\mathcal{P}_{\lambda}$ has reached the value $d_{\lambda}^2$.

The algorithm needs not much memory because only one matrix $D_{\lambda}(p)$ is stored in the computer memory in every step of the algorithm.
A halving of the calculation time can be obtained by the following theorem.
\begin{Thm}
Let a set $\mathcal{P}_{\lambda}$ be known for a partition $\lambda\vdash N$. Then $\mathcal{B}_{\lambda'} := \{p\cdot z_{\lambda'}\;|\;p\in\mathcal{P}_{\lambda}\}$ is a basis of $\mathcal{Z}_{\lambda'}$, where $\lambda' \vdash N$ denotes that partition whose Young frame is transposed to the Young frame of $\lambda$. \label{thm6.1}
\end{Thm}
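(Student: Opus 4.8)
The plan is to realize the passage from $\lambda$ to its conjugate $\lambda'$ as a single algebra automorphism of $\mathbb{C}[\mathcal{S}_N]$ and then transport the given basis across it. First I would record the dimension count: conjugate partitions satisfy $d_{\lambda'} = d_{\lambda}$ (the hook length formula is invariant under transposition of the Young frame), so $\dim\mathcal{Z}_{\lambda'} = d_{\lambda'}^2 = d_{\lambda}^2 = |\mathcal{P}_{\lambda}|$. Hence the candidate set $\mathcal{B}_{\lambda'} = \{p\cdot z_{\lambda'}\mid p\in\mathcal{P}_{\lambda}\}$ has exactly $\dim\mathcal{Z}_{\lambda'}$ elements, and it will suffice to prove that these elements are linearly independent (equivalently, that they span $\mathcal{Z}_{\lambda'}$).

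The key device is the \emph{sign twist} $\theta:\mathbb{C}[\mathcal{S}_N]\rightarrow\mathbb{C}[\mathcal{S}_N]$ defined on the group basis by $\theta(p) := \mathrm{sign}(p)\,p$ and extended linearly. Since $\mathrm{sign}$ is a group homomorphism, one has $\theta(p)\cdot\theta(q) = \mathrm{sign}(p)\mathrm{sign}(q)\,(p\circ q) = \mathrm{sign}(p\circ q)\,(p\circ q) = \theta(p\circ q)$, so $\theta$ is an algebra automorphism of $\mathbb{C}[\mathcal{S}_N]$; in particular it is a linear bijection. Next I would invoke the standard character identity $\chi_{\lambda'}(p) = \mathrm{sign}(p)\,\chi_{\lambda}(p)$ for conjugate partitions. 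Combined with $d_{\lambda'} = d_{\lambda}$ and formula (\ref{eq5}) for the centrally primitive idempotents (where $\chi_{\lambda}(\mathrm{id}) = d_{\lambda}$ and $|\mathcal{S}_N| = N!$), this gives
\begin{equation}
\theta(z_{\lambda}) = \frac{d_{\lambda}}{N!}\sum_{p}\chi_{\lambda}(p)\,\mathrm{sign}(p)\,p = \frac{d_{\lambda'}}{N!}\sum_{p}\chi_{\lambda'}(p)\,p = z_{\lambda'}\,.
\end{equation}

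Because $\theta$ is an algebra automorphism carrying the central idempotent $z_{\lambda}$ to $z_{\lambda'}$, it maps the two-sided ideal $\mathcal{Z}_{\lambda} = \mathbb{C}[\mathcal{S}_N]\cdot z_{\lambda}$ bijectively onto $\mathcal{Z}_{\lambda'} = \mathbb{C}[\mathcal{S}_N]\cdot z_{\lambda'}$. Applying $\theta$ to a basis vector yields $\theta(p\cdot z_{\lambda}) = \theta(p)\cdot\theta(z_{\lambda}) = \mathrm{sign}(p)\,p\cdot z_{\lambda'}$. Since a linear isomorphism sends a basis to a basis, the set $\{\mathrm{sign}(p)\,p\cdot z_{\lambda'}\mid p\in\mathcal{P}_{\lambda}\}$ is a basis of $\mathcal{Z}_{\lambda'}$; rescaling each vector by the nonzero scalar $\mathrm{sign}(p)=\pm1$ leaves a basis, whence $\mathcal{B}_{\lambda'}$ is a basis of $\mathcal{Z}_{\lambda'}$, as claimed. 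The only nontrivial ingredient is the character relation $\chi_{\lambda'} = \mathrm{sign}\cdot\chi_{\lambda}$; everything else is a formal consequence of $\theta$ being an involutive automorphism, so I expect the main (though entirely standard) obstacle to be citing or establishing that identity cleanly.
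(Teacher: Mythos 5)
Your proof is correct, and it rests on exactly the same two ingredients as the paper's proof: the identity $\chi_{\lambda'}(p) = \mathrm{sign}(p)\,\chi_{\lambda}(p)$ for conjugate partitions (which the paper cites from Kerber) and the dimension equality $d_{\lambda'} = d_{\lambda}$. The difference is in the packaging. The paper argues by contradiction: it splits $z_{\lambda} = g_{\lambda} + u_{\lambda}$ and $z_{\lambda'} = g_{\lambda} - u_{\lambda}$ into even and odd parts, assumes a nontrivial vanishing combination $\alpha\cdot z_{\lambda'} = 0$ with $\alpha = \gamma + \omega$, separates the resulting expression into its even part $\Gamma$ and odd part $\Omega$ (each of which must vanish separately), and then checks that $\beta := \gamma - \omega$ gives $\beta\cdot z_{\lambda} = \Gamma - \Omega = 0$, contradicting the independence of $\mathcal{B}_{\lambda}$. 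Your sign-twist automorphism $\theta(p) = \mathrm{sign}(p)\,p$ is precisely the map sending $\alpha\mapsto\beta$ and $z_{\lambda'}\mapsto z_{\lambda}$, so the paper's computation is the hand-unwound version of your one-line statement that an algebra isomorphism carries a basis of $\mathcal{Z}_{\lambda}$ to a basis of $\mathcal{Z}_{\lambda'}$. What your formulation buys is a direct (non-contradiction) proof, no bookkeeping with $\Gamma$ and $\Omega$, and a reusable structural fact ($\theta$ restricts to an isomorphism $\mathcal{Z}_{\lambda}\rightarrow\mathcal{Z}_{\lambda'}$); what the paper's version buys is that it stays entirely within the elementary even/odd manipulation of group ring elements already used elsewhere in the text, without introducing a new map. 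One cosmetic point: since the vectors $p\cdot z_{\lambda'}$ differ from the images $\theta(p\cdot z_{\lambda})$ only by the factors $\mathrm{sign}(p) = \pm 1$, your final rescaling step is exactly right and closes the argument.
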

\begin{proof}
The generating idempotents $z_{\lambda}$, $z_{\lambda'}$ of $\mathcal{Z}_{\lambda}$, $\mathcal{Z}_{\lambda'}$ fulfil
\begin{equation}
z_{\lambda} := \frac{{\chi}_{\lambda}(\mathrm{id})}{N!}\,\sum_{p\in\mathcal{S}_N} {\chi}_{\lambda}(p)\,p\,,\;\;\;\;
z_{\lambda'} := \frac{{\chi}_{\lambda'}(\mathrm{id})}{N!}\,\sum_{p\in\mathcal{S}_N} {\chi}_{\lambda'}(p)\,p\,,
\end{equation}
where ${\chi}_{\lambda}$, ${\chi}_{\lambda'}$ denote the irreducible characters of $\mathcal{S}_n$ belonging to $\lambda$, $\lambda'$. It holds true ${\chi}_{\lambda}(\mathrm{id})= {\chi}_{\lambda'}(\mathrm{id})$, because ${\chi}_{\lambda}(\mathrm{id})= \sqrt{\dim\mathcal{Z}_{\lambda}}$, ${\chi}_{\lambda'}(\mathrm{id})= \sqrt{\dim\mathcal{Z}_{\lambda'}}$ and $\dim\mathcal{Z}_{\lambda}=\dim\mathcal{Z}_{\lambda'}$. Further, it follows from \cite[Vol.~246, p.~65, Statement 4.11]{kerber}, that
\begin{equation}
\forall\;p\in\mathcal{S}_N:\;{\chi}_{\lambda'}(p) = \mathrm{sign}(p)\cdot {\chi}_{\lambda}(p)\,.
\end{equation}
Because of these facts we can write
\begin{equation}
z_{\lambda} = g_{\lambda} + u_{\lambda}\,\;\;\;\; z_{\lambda'} = g_{\lambda} - u_{\lambda}\,,
\end{equation}
where
\begin{equation}
g_{\lambda} := \frac{{\chi}_{\lambda}(\mathrm{id})}{N!}\,\sum_{p\;\mathrm{even}} {\chi}_{\lambda}(p)\,p\,,\;\;\;\;
u_{\lambda} := \frac{{\chi}_{\lambda}(\mathrm{id})}{N!}\,\sum_{p\;\mathrm{odd}} {\chi}_{\lambda}(p)\,p\,.
\end{equation}

Now we assume that $\mathcal{B}_{\lambda} = \{p\cdot z_{\lambda}\;|\;p\in\mathcal{P}_{\lambda}\}$ is a basis of $\mathcal{Z}_{\lambda}$, but $\mathcal{B}_{\lambda'} := \{p\cdot z_{\lambda'}\;|\;p\in\mathcal{P}_{\lambda}\}$ is a set of linearly dependent vectors from $\mathcal{Z}_{\lambda'}$. Consequently, we can form a non-trivial vanishing linear combination of the vectors from $\mathcal{B}_{\lambda'}$,
\begin{equation}
\sum_{p\in\mathcal{P}_{\lambda}} {\alpha}_p\,p\cdot z_{\lambda'} = 0\,.
\end{equation}
Now the element $\alpha := \sum_{p\in\mathcal{P}_{\lambda}} {\alpha}_p\,p$ has a decomposition $\alpha = \gamma + \omega$, where $\gamma$ and $\omega$ are sums which run only through the even or odd permutations of $\mathcal{P}_{\lambda}$, respectively. Now we obtain
\begin{equation}
0 = \alpha\cdot z_{\lambda'} = (\gamma + \omega)\cdot (g_{\lambda} - u_{\lambda}) = \underbrace{(\gamma\cdot g_{\lambda} - \omega\cdot u_{\lambda})}_{\hspace*{15pt}=: \Gamma} + \underbrace{(\omega\cdot g_{\lambda} - \gamma\cdot u_{\lambda})}_{\hspace*{15pt}=: \Omega}\,. \label{eq27}
\end{equation}
It follows from (\ref{eq27}) that
\begin{equation}
\Gamma = 0\,,\;\;\;\;\Omega = 0\,,
\end{equation}
since $\Gamma$ and $\Omega$ contain only even or odd permutations, respectively.

Now we consider the non-trivial linear combination $\beta := \gamma - \omega$ of the permutations from $\mathcal{P}_{\lambda}$. The product of $\beta$ and $z_{\lambda}$ yields
\begin{equation}
\beta\cdot z_{\lambda} = (\gamma - \omega)\cdot (g_{\lambda} + u_{\lambda}) = (\gamma\cdot g_{\lambda} - \omega\cdot u_{\lambda}) + (\gamma\cdot u_{\lambda}-\omega\cdot g_{\lambda}) = \Gamma - \Omega = 0\,.
\end{equation}
So, $\beta\cdot z_{\lambda}$ is a non-trivial vanishing linear combination of the vectors from $\mathcal{B}_{\lambda}$, i.e. $\mathcal{B}_{\lambda}$ would not be a basis of $\mathcal{Z}_{\lambda}$ in contradiction to the assumptions of Theorem \ref{thm6.1}
\end{proof}

\section{An example}
We finish with an example. We consider the Young symmetrizer $y_t\in\mathbb{C}[\mathcal{S}_5]$ of the Young tableau $t = \begin{array}{cccc} {\scriptstyle 5} & {\scriptstyle 4} & {\scriptstyle 2} & {\scriptstyle 1} \\ {\scriptstyle 3} \\ \end{array}$, which belongs to the partition $\lambda = (4\,1)$. $y_t$ has the length 48. It is proportional to a primitive idempotent $e = \frac{1}{30}\, y_t$ and generates a 4-dimensional minimal right ideal of $\mathbb{C}[\mathcal{S}_5]$.
Now our algorithms yield
\begin{eqnarray*}
{YS} := D_{\lambda}(y_t) & = & \left(
\begin{array}{rrrr}
0 & 0 & 0 & 0 \\
0 & 0 & 0 & 0 \\
-30 & 0 & 30 & 0 \\
0 & 0 & 0 & 0 \\
\end{array}
\right) \\
{YS}^{\ast} := D_{\lambda}(y_t^{\ast}) & = & \left(
\begin{array}{rrrr}
6 & 6 & -24 & 6 \\
0 & 0 & 0 & 0 \\
-6 & -6 & 24 & -6 \\
0 & 0 & 0 & 0 \\
\end{array}
\right) \\
F := \frac{1}{1440}\;{YS}\cdot {YS}^{\ast} & = & \left(
\begin{array}{rrrr}
0 & 0 & 0 & 0 \\
0 & 0 & 0 & 0 \\
-\frac{1}{4} & -\frac{1}{4} & 1 & -\frac{1}{4} \\
0 & 0 & 0 & 0 \\
\end{array}
\right) \\
\end{eqnarray*}
$D_{\lambda}$ was calculated by means of Young's natural representation of $\mathcal{S}_5$. $F$ represents the primitive idempotent $f$ with property (S) which generates the same right ideal as $y_t$ (see Theorem \ref{thm2.8}). $f$ has the length 120.

One can easily check the properties $YS\cdot YS = 30 YS$, $YS^{\ast}\cdot YS^{\ast} = 30 YS^{\ast}$, $F\cdot F = F$, $F\cdot YS = YS$, $\frac{1}{30}\, YS\cdot F = F$. The relation $F^{\ast} = F$ can be verified by means of our {\sf Mathematica} package {\sf PERMS} \cite{fie10}.

We implemented all algorithms presented in this paper in {\sf PERMS}. {\sf Mathematica} notebooks of the calculation for this paper can be downloaded from \cite{fie21}.

\section*{References}

\end{document}